\theoremstyle{plain}
\newtheorem*{theorem*}{Theorem}
\newtheorem{theorem}{Theorem}[section]
\newtheorem{lemma}[theorem]{Lemma}
\newtheorem{prop}[theorem]{Proposition}
\newtheorem{corollary}[theorem]{Corollary}
\theoremstyle{definition}
\theoremstyle{remark}
\newtheorem{remark}[theorem]{Remark}
\DeclareMathOperator{\supp}{supp}
\newcommand{\ee}{\varepsilon}
\newcommand{\pare}[1]{\left( #1 \right)}
\newcommand{\set}[1]{\left\lbrace #1 \right\rbrace}
\newcommand{\norm}[1]{\left\| #1 \right\|}
\newcommand{\av}[1]{\left| #1 \right|}
\newcommand{\system}[1]{\left\{ #1 \right.}
\definecolor{darkgreen}{rgb}{0, 0.5, 0}
\newcommand\smallc{
  \mathchoice
    {{\scriptstyle\mathcal{C}}}
    {{\scriptstyle\mathcal{C}}}
    {{\scriptscriptstyle\mathcal{C}}}
    {\scalebox{.7}{$\scriptscriptstyle\mathcal{C}$}}
  }
\numberwithin{equation}{section}
\date{}
\begin{document}

\title{\bf Quantitative aspects on the ill-posedness of the Prandtl and hyperbolic Prandtl equations}

\author{Francesco De Anna$\,^1$, Joshua Kortum$\,^2$,   Stefano Scrobogna$\,^3$}
\affil{\small
  $\,^1$ Institute of Mathematics, University of Würzburg, Germany\\
    email: francesco.deanna@uni-wuerzburg.de\\

    \vspace{0.3cm}
    $\,^2$ Institute of Mathematics, University of Würzburg, Germany \\
    email: joshua.kortum@uni-wuerzburg.de\\
    
     \vspace{0.3cm}
    $\,^3$ Dipartimento di Matematica e Geoscienze, Università degli Studi di Trieste, Italy, \\
    email: stefano.scrobogna@units.it
}

\maketitle

\begin{abstract}
 \noindent  
  We address a physically-meaningful extension of the Prandtl system, also known as hyperbolic Prandtl equations. We show that the linearised model around a non-monotonic shear flow is ill-posed in any Sobolev spaces. Indeed, shortly in time, we generate solutions that experience a dispersion relation of order $\sqrt[3]{k}$ in the frequencies of the tangential direction, akin the pioneering result of Gerard-Varet and Dormy in \cite{MR3925144} for Prandtl (where the dispersion was of order $\sqrt{k}$). We emphasise however that this growth rate does not imply (a-priori) ill-posedness in Gevrey-class $m$, with $m>3$ and we relate also these aspects to the original Prandtl equations in Gevrey-class $m$, with $m>2$.
  
  \noindent 
  By relaxing certain assumptions on the shear flow and on the solutions, namely allowing for unbounded flows in the vertical direction, we however determine a suitable shear flow for the mentioned ill-posedness in Gevrey spaces.
\end{abstract}

\section{Introduction}

\subsection{Presentation of the problem}

In this work we are mainly concerned with the following hyperbolic extension of the linearised Prandtl equations (cf.~\cite{PZ2022,DKS2023, MR4498949, aarach2023global, PZZ2020})
\begin{equation}
    \label{LHP}
    \system{
    \begin{alignedat}{4}
    & ( \partial_t +1)\big(
    \partial_t u  + U_{{\rm s}}  \partial_x u + v \ U'_{{\rm s}}\big) - \partial_{y}^2 u =0,
    \qquad (t,x,y)\in \,
    &&(0,T) \times \mathbb{T}\times \mathbb R_+,\\
    & \partial_x u + \partial_y v =0
    &&(0,T) \times \mathbb{T}\times \mathbb R_+,
    \\
    & \left. \pare{u, u_t}\right|_{t=0} =\pare{u_{\rm in}, u_{t,\rm in}}
    &&\hspace{1.425cm}\mathbb{T}\times \mathbb R_+,
    \\
    & \left. u\right|_{y=0}=0,\quad v_{|y = 0} = 0
     &&(0,T) \times \mathbb{T}.
    \end{alignedat}
    } 
\end{equation}
The main unknown is the scalar component $u = u(t,x,y)$ of the velocity field $(u, v)^T$, while $v = v(t,x,y)$ is formally determined by the divergence-free condition
\begin{equation*}
    v(t,x,y) = -\int_0^y \partial_x u(t,x,z)dz.
\end{equation*}
The function $U_s = U_s(y)$ is prescribed and defines a suitable shear flow $(U_s(y), 0)$ around which the model have been linearised. 
We aim to develop an ill-posedness theory of \eqref{LHP} in Sobolev spaces akin to the one in the seminal paper of G\'erard-Varet and Dormy \cite{GD2010}, for the linearization of the Prandtl equations around a shear flow. Our main result concerns a {\it norm inflation} phenomenon in Sobolev regularities: some initial data, that are of unitary size in $H^m$ along the variable $x\in \mathbb T$, generate solutions of inflated size $\delta^{-1}$ after a time $t = \delta > 0$ arbitrarily small (cf.~\Cref{thm:ill-posedness-Sobolev}).

\noindent 
In \cite{DKS2023}, we proved that System \eqref{LHP} is locally well-posed when the initial data have regularity Gevrey-class~3 along $x\in \mathbb T$. With this work, we aim to provide further insights on the well- or ill-posedness issue when the initial data are Gevrey-class $m$, $m>3$. We relate moreover these aspects to the original Prandtl equations in Gevrey-class $m$, $m>2$, supporting with certain remarks (cf.~\Cref{sec:remark-on-Gevrey-m>3}) the main result of G\'erard-Varet and Dormy in \cite{GD2010}.


\subsection{A brief overview on the analysis of the Prandtl equations}

System \eqref{LHP} shares several similarities with the classical Prandtl system,
whose leading equation is always on the state variable $u= u(t,x,y)$
\begin{equation*}
    \partial_t u + u\partial_x u + v \partial_y u - \partial_y^2 u = 0,\qquad 
    (t,x,y)\in 
    (0,T) \times \mathbb{T}\times \mathbb R_+
\end{equation*}
and  replaces the first equation in \eqref{LHP} (with initial data only on $u$). Nowadays, the Prandtl model belongs to the mathematical and physical folklore, predicting the dynamics of boundary layers in fluid mechanics. Numerous mathematical investigations have been performed during the past decades, in order to reveal the underlying instabilities of the solutions. In this paragraph, we shall briefly recall some of the results about the related  well- and ill-posedness issues. For further analytical problems like asymptotic behaviour, boundary-layer separations, boundary-layer expansions, homogenisation and more refined models, we refer for instance to \cite{DM2019,GLSS2009,PZ2021,MR2979511,MR4462474, MR3634071,Renardy2009,DG2022} and references therein.

\noindent 
Regarding the well-posedness of the classical Prandtl equations, there are two branches of assumptions that one might impose on the initial data: they are monotonic in the vertical variable (hence they do not allow for non-degenerate critical points) or they belong to suitable function spaces that control all derivatives along  the tangential variable (i.e.~analytic or Gevrey initial data). 

\noindent 
In case the initial data are monotonic, the system is known to be well-posed in standard functions spaces, such as Sobolev and H\"older. Roughly speaking this monotonicity precludes certain instabilities of the solutions (at least locally in time), such as the so-called boundary-layer separations. An analytic result in this direction was shown by Oleinik (see e.g.\ \cite{oleinik1963prandtl,Oleinik1999}) using the Crocco transform. More recently, the result was renovated by Masmoudi and Wong in \cite{MW2015}, and Alexandre, Wang, Xu and Yang in \cite{AWXY2015} without employing the latter. 

\noindent 
On the other hand, requiring the initial data to be highly regular suffices as well for the well-posedness of the Prandtl equations. Caflish and Sammartino \cite{SC1998} proved local-in-time well-posedness for initial data that are analytic in all directions. In reality, a refinement elaborated in \cite{lombardo2003well} showed that the analyticty condition is only required in the tangential variable. 

\noindent
Motivated by findings on ill-posedness, the set of functions spaces allowing for local existence theory was enlarged from analytic to the Gevrey scale. We mention here the works of G\'erard-Varet and Masmoudi \cite{GM2015} in the class $7/4$, Li and Yang \cite{LY2020} for small initial data in the class $2$, and Dietert and G\'erard-Varet \cite{MR3925144} achieving the well-posedness also for large data in the Gevrey class 2. \\
The latter regularity property is strongly suggested to be the borderline case. Indeed, smooth solutions might blow-up without analyticity or monotonicity assumptions as shown in \cite{weinan1997blowup}. But more drastically,  G\'erard-Varet and Dormy proved in  \cite{GD2010} the ill-posedness of the linearized Prandtl system in any Sobolev space by constructing solutions which experience a norm inflation as $e^{\sigma  \sqrt{k}t}$ (for short times, c.f.\ the discussion in Section \ref{sec:remark-on-Gevrey-m>3}), with $\sigma > 0$. These results are extended in \cite{Gerard-Varet-Nguyen-2012,guo2011note} by G\'erard-Varet and Nguyen, and Guo and Nguyen, to non-existence results of weak solutions for the linear Prandtl system in the energy space. The latter works additionally shed light on the fully non-linear system culminating in  proving Lipschitz ill-posedness. 

\noindent
In sum,  the well-posedness theory obtained in the literature reflects in general the physically observed instabilities related to the Prandtl equations. One of the questions which arise is if extended models as \eqref{LHP}  inhere better properties as a result of the finite speed propagation. In \cite{DKS2023} we provided a positive answer to this dilemma, enlarging the Gevrey scale from the class $2$ to the class $3$. With the current work we show however that System \eqref{LHP} is still not  desirable for the well-posedness in Sobolev spaces.

\noindent

\subsection{Some physical aspects of the model}
System \eqref{LHP} formally arises as a (linearised) model for the boundary layers of the Navier-Stokes equations, whose Cauchy stress tensor is ``delayed'' through a first-order Taylor expansion:
\begin{equation}\label{eq:delayed-Navier-Stokes}
    \partial_t U + U \cdot \nabla U + \nabla P = {\rm div}\,\mathbb S,\qquad 
    \mathbb{S}(t+ \tau, \cdot) \approx  \mathbb{S}(t, \cdot)+\tau \partial_t  \mathbb{S}(t, \cdot) =\mu \frac{\nabla U(t,\cdot)+\nabla U(t,\cdot)^T}{2}. 
\end{equation} 
We refer to \cite{ADPZ2022,DKS2023} and references therein for a deeper discussion on the physical motivations and derivation of the model. We point out that this delayed relation on $\mathbb S$ was introduced in fluid-dynamics by Carrassi and Morro~\cite{CARRASSIMORRO},  inspired by the celebrated work of Cattaneo \cite{Cattaneo1949, Cattaneo1958} on heat diffusion. Moreover, System \eqref{eq:delayed-Navier-Stokes} represents somehow a simplified version of the celebrated Oldroyd-B model (or Maxwell equations, cf.~for instance \cite{chun}):
\begin{equation*}
    \partial_t U + U \cdot \nabla U - \nu \Delta u + \nabla P  = \mu_1 {\rm div}\, \mathcal{T},\quad 
    \partial_t \mathcal{T} + u\cdot \nabla  \mathcal{T} + \gamma  \mathcal{T} - \nabla U \mathcal{T}   - (\nabla U)^T  \mathcal{T} = \mu_2 
    \frac{\nabla U(t,\cdot)+\nabla U(t,\cdot)^T}{2},
\end{equation*}
where the constant $\gamma $ represents the time scale for the elastic relaxation. Equations \eqref{eq:delayed-Navier-Stokes} neglects nevertheless several terms of Oldroyd-B, in particular the convective and upper convective derivatives. We might therefore interpret our results as precursors for the analytical understanding of boundary layers of viscoelastic fluids.

\subsection{Main Result}
For the sake of comparison, we employ a similar Ansatz as the one of G\'erard-Varet and Dormy in~\cite{GD2010}. All along the present manuscript, we denote by $W^{s, \infty}_\alpha$, with $s\geq 0$ and $\alpha>0$, the following weighted Sobolev spaces in $\mathbb R_+ = [0, \infty)$
\begin{align*}
    W^{s, \infty}_\alpha 
    = 
    W^{s, \infty}_\alpha 
    (\mathbb R_+)
    := & \ \set{ f\in W^{s, \infty}(\mathbb R_+)  
    \; 
    \text{such that} 
    \; 
    y \in \mathbb R_+ \to e^{\alpha y}f(y)
    \; 
    \text{is also in } 
    \; 
    W^{s, \infty}(\mathbb R_+)},    
\end{align*}
endowed with the norm $ \norm{f}_{W^{s,\infty}_\alpha} := \norm{e^{\alpha \cdot } f }_{ W^{s, \infty}}$. For the entirety of this manuscript $\alpha>0$ is fixed and might be imposed equal to $1$ (we keep the notation $W^{s,\infty}_\alpha$, mainly because of \cite{GD2010}). 
Since our solutions are periodic in the tangential direction $x\in \mathbb T$, we also set  $H^{m} W^{s, \infty}_{\alpha} =  H^m(\mathbb T,W^{s,\infty}_{\alpha}(\mathbb R_+))$, $m\geq 0$ and also 
$\mathcal{C}^\omega  W^{0, \infty}_{\alpha} = \mathcal{C}^\omega (\mathbb T,W^{0, \infty}_{\alpha}(\mathbb R_+))$
, making use of the Fourier transform in $x\in \mathbb T$:
\begin{equation*}
\begin{aligned}
    H^{m} W^{0, \infty}_{\alpha}
    &=
    \bigg\{
        f =  \sum_{k\in \mathbb Z}f_k(y)e^{ikx} \quad (x,y)\in \mathbb T\times \mathbb R_+,
        \quad 
        \| f \|_{H^{m} W^{s, \infty}_{\alpha}}
        :=
        \bigg(
        \sum_{k\in \mathbb Z}
        (1+k^2)^m
        \| f_k \|_{W^{s, \infty}_{\alpha}}^2
        \bigg)^\frac{1}{2}
        < \infty
    \bigg\},\\
    \mathcal{C}^\omega  W^{0, \infty}_{\alpha}
    &=
    \bigg\{
        f =  \sum_{k\in \mathbb Z}f_k(y)e^{ikx} \quad (x,y)\in \mathbb T\times \mathbb R_+,
        \quad 
        \sup_{k\in \mathbb Z}
        e^{\sigma |k|}
        \| f_k \|_{W^{s, \infty}_{\alpha}}
        < \infty,
        \quad
        \text{for a given}
        \quad 
        \sigma>0
    \bigg\}.
\end{aligned}
\end{equation*}
Our main result expresses the ill-posedness of System \eqref{LHP} in $H^{m} W^{0, \infty}_{\alpha}$ when the shear flow $U_s(y)$ is non-monotonic. It develops around a family of solutions that are of unitary size in Sobolev spaces  at $t= 0$ and experience an inflation of the norm after any short time $t = \delta >0$.
\begin{theorem}\label{thm:ill-posedness-Sobolev}
    Assume that the shear flow $U_s = U_s(y)$ is in $W^{4,\infty}_\alpha(0,\infty)$ and satisfies the structural relations $U_s(a) = U_s'(a) = 0$ together with $U_s''(a) \neq  0$, for a given $a\in (0, \infty)$. 
    Then for all $m\geq 0$, $\mu \in [0, 1/3)$ and small time $\delta>0$, there exists a pair of initial data
    \begin{equation*}
    \begin{aligned}
        \pare{u_{\rm in}, u_{t,\rm in}} \in 
        \mathcal{C}^\omega
        (\mathbb T
        &,W^{1,\infty}_{\alpha}(\mathbb R_+))
        \times 
        \mathcal{C}^\omega
        (\mathbb T,W^{0, \infty}_{\alpha}(\mathbb R_+))
        \hookrightarrow
        H^m
        (\mathbb T,W^{1,\infty}_{\alpha}(\mathbb R_+))
        \times 
        H^m
        (\mathbb T,W^{0, \infty}_{\alpha}(\mathbb R_+))
        \\
        &\text{with} 
        \quad 
        \| u_{\rm in} 
        \|_{H^m W^{1, \infty}_{\alpha}}
        +
        \| u_{t,\rm in} 
        \|_{H^m W^{0, \infty}_{\alpha}}
        \leq 1,  
    \end{aligned}
    \end{equation*}
    that generates a global-in-time smooth solution $u$ of \eqref{LHP}, which satisfies
    \begin{equation}\label{Sobolev-norm-inflation-delta}
        \sup_{0\leq t \leq \delta }
        \| u(t) \|_{H^{m-\mu} W^{0, \infty}_{\alpha}}
        \geq \frac{1}{\delta}.
    \end{equation}
\end{theorem}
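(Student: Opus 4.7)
My plan is to follow the quasi-mode strategy of G\'erard-Varet and Dormy \cite{GD2010}, adapted to the second-order-in-time operator of \eqref{LHP}. For each large tangential frequency $k\in\mathbb{N}$, I look for an approximate solution of the form
\begin{equation*}
u^{\text{app}}_k(t,x,y)=e^{ikx-ik\tau(k)t}\,\Psi_k(y),\qquad \operatorname{Im}\bigl(-ik\tau(k)\bigr)>0,
\end{equation*}
so that $e^{-ik\tau t}$ is exponentially amplified. Introducing the stream function $\psi$ with $\Psi_k=\psi'$ and $v^{\text{app}}_k=-ik\psi\,e^{ikx-ik\tau t}$, plugging into \eqref{LHP} and differentiating once in $y$, one gets the fourth-order linear ODE
\begin{equation*}
    ik(1-ik\tau)\bigl[(U_s-\tau)\psi''-U_s''\psi\bigr]=\psi''''.
\end{equation*}
The double zero of $U_s$ at $y=a$ with $U_s''(a)\neq 0$ dictates the critical-layer rescaling $y=a+zk^{-1/3}$, $\tau=\tau_0 k^{-2/3}$; this is precisely what produces the advertised $k^{1/3}$ growth rate. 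Taylor-expanding $U_s$ and balancing the leading powers of $k$ yields the $k$-independent limit spectral problem
\begin{equation*}
    \tau_0\Bigl[\bigl(\tfrac{1}{2}U_s''(a)\,z^2-\tau_0\bigr)\Psi''-U_s''(a)\,\Psi\Bigr]=\Psi'''',
\end{equation*}
posed on $z\in\mathbb{R}$ with decay at $\pm\infty$.

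The central analytic task is to exhibit a non-trivial profile $\Psi_0$ together with a complex $\tau_0$ satisfying $\operatorname{Im}(\tau_0)>0$. I would treat this perturbatively: the principal second-order operator $(\tfrac{1}{2}U_s''(a)z^2-\tau_0)\partial_z^2-U_s''(a)$ is of Rayleigh type with a quadratic shear having a critical point, for which classical Tollmien--Lin constructions yield an unstable mode $\tau_0^*$; the correction $\tau_0^{-1}\Psi''''$ is then treated by an implicit function argument, producing a nearby unstable root. This is the crux of the proof, since the fourth-order term substantially changes the bicharacteristic geometry near the turning points $z=\pm\sqrt{2\tau_0/U_s''(a)}$ and must not destroy the instability; this is where the assumption $U_s''(a)\neq 0$ is essential.

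Armed with $(\tau_0,\Psi_0)$, I lift this into an honest quasi-mode by a WKB expansion $\Psi_k=\Psi_0+k^{-1/3}\Psi_1+\dots+k^{-N/3}\Psi_N$, solving a hierarchy of transport equations so that $u^{\text{app}}_k$ satisfies \eqref{LHP} up to a residual $r_k$ of size $k^{-N}e^{Ck^{1/3}t}$ in the weighted norms of the excerpt. Since the critical layer sits at distance $ak^{1/3}\to\infty$ from the wall in the rescaled variable, the Dirichlet condition at $y=0$ is satisfied up to a super-polynomially small boundary corrector which I absorb in $r_k$. A weighted energy estimate on $w=u-u^{\text{app}}_k$, using the hyperbolic-parabolic structure $(\partial_t+1)(\cdots)-\partial_y^2(\cdots)=0$ (which provides a coercive quadratic form in $\partial_t w$, $w$, $\partial_y w$ against the $e^{\alpha y}$ weight), yields $\|w(t)\|_{H^{m-\mu}W^{0,\infty}_\alpha}\ll \|u^{\text{app}}_k(t)\|_{H^{m-\mu}W^{0,\infty}_\alpha}$ once $N$ is chosen large relative to $m$.

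For each small $\delta>0$, I then select a single frequency $k=k(\delta)$ of order $\delta^{-3}\log^3(1/\delta)$, large enough that $c\,k^{1/3}\delta\geq (1+\mu)\log k+\log(2/\delta)$, and set
\begin{equation*}
u_{\text{in}}(x,y)=k^{-m-1/2}\,e^{ikx}\,\Psi_k(y),\qquad u_{t,\text{in}}(x,y)=-ik\tau(k)\,u_{\text{in}}(x,y),
\end{equation*}
which is analytic in $x$ and satisfies the unit-ball constraint $\|u_{\text{in}}\|_{H^mW^{1,\infty}_\alpha}+\|u_{t,\text{in}}\|_{H^mW^{0,\infty}_\alpha}\leq 1$ thanks to the exponential decay of $\Psi_k$ in $y$. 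Propagation of the quasi-mode and the residual estimate then give $\|u(\delta)\|_{H^{m-\mu}W^{0,\infty}_\alpha}\gtrsim k^{-\mu}e^{ck^{1/3}\delta}\geq 1/\delta$, which is exactly \eqref{Sobolev-norm-inflation-delta}. I expect the spectral existence of the unstable profile, and the uniform-in-$k$ control of the WKB residual across the critical layer, to be the most delicate technical points; the remainder is a fairly standard boundary-layer bookkeeping.
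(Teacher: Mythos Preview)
Your overall architecture---quasi-mode ansatz, critical-layer rescaling $y=a+k^{-1/3}z$, $\tau=\tau_0 k^{-2/3}$ giving the $k^{1/3}$ growth rate, then frequency selection $k=k(\delta)$---matches the paper. But two of the three steps you flag as ``delicate'' are in fact genuine gaps in your proposal.

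\textbf{The spectral problem.} Your limit ODE is correct, but your plan to solve it perturbatively is not: the ``correction'' $\tau_0^{-1}\Psi''''$ is $O(1)$, not small, so there is no small parameter to run an implicit-function argument from a Rayleigh/Tollmien--Lin base case. The paper proceeds completely differently. It integrates once to obtain a third-order ODE for the stream-function profile (so the equation one actually solves is \Cref{lemma:W}, not your fourth-order version), then performs the explicit substitutions $\gamma=\alpha^{1/3}e^{-i2\pi/3}$, $z=\alpha^{1/6}e^{i\pi/6}\tilde x$ to reduce it to the \emph{same} self-adjoint eigenvalue problem $Af=\alpha f$ that G\'erard-Varet and Dormy already solved in \cite{GD2010}. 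The existence of an unstable $\tau$ thus follows by quotation, not by a new perturbative analysis.

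\textbf{The error control.} This is the more serious issue. You assert that a weighted energy estimate on $w=u-u^{\rm app}_k$ yields $\|w(t)\|\ll\|u^{\rm app}_k(t)\|$. But the whole content of the theorem is that the linearised operator admits no $k$-uniform energy estimate in Sobolev spaces: the nonlocal term $kv_kU_s'=-ikU_s'\int_0^y u_k$ is precisely what drives the instability. At fixed $k$ the problem is of course well-posed, but the only upper bound one can hope for on the semigroup is of Gevrey-3 type, $\|T_k(t)\|\lesssim e^{Ck^{1/3}t}$ with $C$ in general strictly larger than $\sigma_0$; establishing even this requires the full Gevrey-3 well-posedness machinery of \cite{DKS2023}, which you do not invoke. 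The paper avoids this entirely. It builds only the \emph{leading-order} quasi-mode (no high-order WKB---the residual is merely $O(1)$, cf.~\eqref{uniform-bound-on-Rk}), and then runs a Duhamel-plus-contradiction argument (\Cref{lemma:inflation-of-Tk}): assume $\sup_{t\le c\ln k/k^{1/3}}e^{-\sigma tk^{1/3}}\|T_k(t)\|\le \mathcal C_\sigma k^{1/3}$, feed this into the Duhamel integral for the forced solution $u_k^{\rm fr}$, and arrive at $1/2\ge 1$. This yields the lower bound on $\|T_k\|$ without ever needing an upper bound. Your high-order WKB expansion and energy estimate are therefore both unnecessary and, as stated, unjustified.
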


\begin{remark}
    We allow the solutions $u$ in \Cref{thm:ill-posedness-Sobolev} and the suitable family of initial data to take values in $\mathbb C$. Nevertheless, since the shear flow as well as all other coefficients \eqref{LHP} are real valued, taking the real (or imaginary part respectively) provide (non-trivial) real-valued solutions which undergo the same instability mechanism.

    \noindent 
    The parameter $\mu \in [0, 1/3)$ allows to enlarge the Sobolev space for the values of the solutions. As long as the regularity index of $H^{m-\mu}$ differs less than $1/3$  from the original space $H^m$ (a reminiscent of the Gevrey-class $3$ regularity), the problem is still ill-posed. An analogous threshold was shown in \cite{GD2010} with $\mu \in [0, 1/2)$ (reminiscent of Gevrey-class $2$).
\end{remark}

\noindent 
As for its homologous problem in \cite{GD2010}, the instability process described by \Cref{thm:ill-posedness-Sobolev} is mainly enabled by a meaningful family of initial data, which highly oscillate in the tangential variable $x\in \mathbb T$. More precisely, at high frequencies $k\gg 1$, we select initial data $\pare{u_{\rm in}, u_{t,\rm in}}$ on the eigenspace of $e^{ik x}$ and determine a suitable non-trivial asymptotic of the $(t,y)$-dependent Fourier coefficients as frequencies $k\to \infty$ (more details are provided starting from \Cref{sec:outline-of-the-proof}). We show that the corresponding solutions grow as $e^{\sigma \sqrt[3]{k}t}$ at least for a very short time depending on the frequencies (cf.~\Cref{sec:remark-on-Gevrey-m>3},  \Cref{prop:main-prop} and \Cref{lemma:inflation-of-Tk}). 
Moreover, as $k\to \infty$, the profiles in $y\in \mathbb R_+$ of the initial data relate to a  suitable ``spectral condition'' for the following ordinary differential equation:
\begin{lemma}\label{lemma:W}
There exists a complex number $\gamma \in \mathbb C$ with ${\rm Im }(\gamma)<0$ and a complex solution $W : \mathbb R \to \mathbb C$ of the ordinary differential equation
\begin{equation}\label{eq:equation-of-W-tilde-intro}
    \gamma
    \big( 
        \gamma - z^2
    \big)^2
    \frac{d}{d z} W(z)
    +
    \frac{d^3}{d z^3}
    \bigg[ 
        \big(
           \gamma - z^2
        \big)W(z) 
    \bigg]= 0,\qquad z\in \mathbb R,
\end{equation}
such that $\lim_{z\to -\infty} W(z) = 0$ and $\lim_{z\to +\infty} W(z) = 1$.     
\end{lemma}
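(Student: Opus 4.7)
The strategy is a shooting argument combined with a spectral reduction. A direct check shows that $W_0 \equiv 1$ solves the ODE (both summands vanish since $W_0'=0$ and $\partial_z^3(\gamma-z^2)=0$). Moreover, the equation involves $W$ only through its derivatives $W',W'',W'''$, so the substitution $\chi := W'$ lowers the order by one and yields the second-order linear equation
\begin{equation*}
    (\gamma - z^2)\chi''(z) - 6z\,\chi'(z) + \bigl[\gamma(\gamma - z^2)^2 - 6\bigr]\chi(z) = 0;
\end{equation*}
the prescribed limits $W(-\infty)=0$ and $W(+\infty)=1$ translate into the requirement that $\chi \in L^1(\mathbb R)$ with $\int_{\mathbb R}\chi(z)\,dz = 1$. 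The target solution is then recovered as $W(z) = \int_{-\infty}^z\chi(s)\,ds$.

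A WKB analysis at $|z|\to\infty$ of the reduced equation reveals $\chi''\sim\gamma z^2\chi$ to leading order, producing two asymptotic modes $\chi \sim z^p\exp\!\bigl(\pm\tfrac12\sqrt{\gamma}\,z^2\bigr)$. With $\operatorname{Im}\gamma<0$ and $\sqrt{\gamma}$ chosen as the principal branch (so $\operatorname{Re}\sqrt{\gamma}>0$), exactly one mode decays super-exponentially at each infinity. Letting $\chi_\pm(\gamma,\cdot)$ denote the one-dimensional families of solutions decaying at $\pm\infty$, which depend analytically on $\gamma$ by standard parameter-dependence results, the existence of an $L^1$-solution of the reduced equation is equivalent to the linear dependence of $\chi_+$ and $\chi_-$, i.e.\ to the vanishing of the shooting Wronskian $\mathcal W(\gamma) := \chi_+\chi_-' - \chi_+'\chi_-$. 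This is one complex equation in the one complex unknown $\gamma$.

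The main obstacle will be to exhibit an actual zero of $\mathcal W$ in the open lower half-plane. My preferred route is to reduce the ODE to an explicit spectral problem via the Liouville transformation $\chi = (\gamma-z^2)^{-3/2}\psi$, which produces the Schrödinger-type form
\begin{equation*}
    \psi''(z) + \gamma\Bigl[(\gamma-z^2) - \frac{3}{(\gamma-z^2)^2}\Bigr]\psi(z) = 0;
\end{equation*}
the rescaling $z=\sqrt{\gamma}\,w$ then collapses the coefficient structure to an equation with spectral parameter $\mu := \gamma^3$ (equivalently, Fourier-transforming the original third-order equation yields a fourth-order operator that factors as a parabolic-cylinder operator composed with a Helmholtz operator). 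Either reduction casts the admissibility condition as a discrete eigenvalue problem for a parabolic-cylinder/Weber function, and the cube-root relation between $\gamma$ and the true spectral parameter guarantees that each positive-real eigenvalue yields three candidate values of $\gamma$, exactly one of which lies in $\{\operatorname{Im}\gamma<0\}$. The most delicate technical step will be the bookkeeping of branch cuts under $z = \sqrt{\gamma}\,w$, since the real $z$-axis is mapped onto a complex ray in the $w$-plane and admissibility along that ray has to be reconciled with the standard asymptotic behavior of the special functions; as a backup, uniform-in-$\gamma$ WKB estimates combined with an argument-principle computation along a large semicircle in the lower half-plane provide a topological alternative for the existence.
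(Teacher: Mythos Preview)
Your reduction $\chi := W'$ and the resulting second-order equation coincide exactly with what the paper uses: writing $X$ for the derivative profile, the paper's equation $(\gamma-z^2)X'' - 6zX' + [\gamma(\gamma-z^2)^2 - 6]X = 0$ is your $\chi$-equation, and $W(z) = \int_{-\infty}^z X / \int_{\mathbb R} X$. The substantive difference is how the admissible $\gamma$ is produced. The paper does not shoot. It imports the spectral result of G\'erard-Varet and Dormy (Proposition~2 in \cite{GD2010}): the \emph{real} operator
\[
Af = (\tilde x^2+1)^{-1}f'' + 6\tilde x(\tilde x^2+1)^{-2}f' + 6(\tilde x^2+1)^{-2}f
\]
on a weighted $L^2$ space has a positive eigenvalue $\alpha>0$ with an eigenfunction $f$ that extends holomorphically off the real axis and decays like a Gaussian in the sectors $|\arg \tilde x|<\pi/4$. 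One then sets $\gamma := \alpha^{1/3}e^{-2\pi i/3}$ (so $\gamma^3 = \alpha$, precisely your cube-root observation) and defines $X(z) := f(\alpha^{-1/6}e^{-i\pi/6}z)$. The complex rotation sends the real $z$-axis into the good sector of $f$, so $X$ inherits Gaussian decay on $\mathbb R$, and a direct substitution shows $X$ solves the $\chi$-equation. This bypasses the WKB and Wronskian machinery entirely: the hard analysis (existence of the eigenvalue, analytic continuation, sectorial decay) is outsourced to \cite{GD2010}, whereas your argument-principle route would have to reprove these from scratch and, as you note, handle the branch-cut bookkeeping along the rotated ray.

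There is also a genuine gap in your outline. A successful shooting argument produces a nonzero $\chi \in L^1(\mathbb R)$, but you then need $\int_{\mathbb R}\chi \neq 0$ to normalise and recover the limits $W(-\infty)=0$, $W(+\infty)=1$; nothing in the matching of decaying modes prevents $\int\chi = 0$. The paper confronts the same issue and resolves it by an energy identity: if $\int X = 0$ then $F(z) := (\gamma-z^2)\int_{-\infty}^z X$ decays at both infinities and solves a fourth-order ODE; testing against $\overline{F''}$ and taking imaginary parts yields
\[
-\frac{\mathcal{I}m(\gamma^2)}{\mathcal{I}m(\gamma)}\int_{\mathbb R}|F''|^2 + \int_{\mathbb R}\bigl(z^2|F''|^2 + 2|F'|^2\bigr) = 0,
\]
and for the specific choice $\gamma = \alpha^{1/3}e^{-2\pi i/3}$ one has $\mathcal{I}m(\gamma)<0<\mathcal{I}m(\gamma^2)$, so every term is nonnegative and $F\equiv 0$, contradicting $X\not\equiv 0$. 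Your proposal would need an analogous step, and note that it uses precise information on $\arg\gamma$ that a bare topological existence argument for a Wronskian zero does not by itself supply.
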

\noindent 
The ODE \eqref{eq:equation-of-W-tilde-intro} is similar to its homologous $(1.7)$ in \cite{GD2010} (cf.~also here \eqref{eq:GV-Dormy-equation-of-W}). They do present however some technical differences, that are mainly due to the fact that the leading operator of Prandtl is a heat equation,  whereas the leading term in \eqref{LHP} is a wave equation (at high frequencies $k \gg 1$).





\subsection{About the ill-posedness of the Prandtl equation}\label{sec:remark-on-Gevrey-m>3}
The pioneering work of  Dietert and G\'erard-Varet in \cite{MR3925144} established the local-in-time well-posedness of the Prandtl equations, when the initial data have Gevrey-class-2 regularity along the tangential direction $x\in \mathbb T$. Before their result,  the exponent of Gevrey-class 2 was attained only in the special setting prescribed by \cite{LY2020}, in light also of the previous investigation of G\'erard-Varet and Dormy \cite{GD2010} on the ill-posedness of Prandtl in Sobolev spaces. Indeed, for any sufficiently-large frequency $k\gg 1$, G\'erard-Varet and Dormy succeeded in establishing 
solutions of the linearised Prandtl equations around a non-monotonic shear flow $(U_s(y), 0)$
\begin{equation}\label{eq:linearised-Prandtl-autonomous}
    \partial_t u + U_s(y) \partial_x u + v U_s'(y) - \partial_y^2 u  = 0,
    \qquad 
    \partial_x u + \partial_y v = 0,
    \qquad (t,x,y)\in (0,T) \times \mathbb T \times \mathbb R_+,
\end{equation}
experiencing a dispersion relation proportional to $\sqrt{k}$. Their result was later on strengthened by further investigations on related problems, we report for instance G\'erard-Varet and Nguyen in \cite{Gerard-Varet-Nguyen-2012} and Ghoul, Ibrahim, Lin and Titi in~\cite{Ghoul-Ibrahim-Lin-Titi-2022}, emphasising that there exist solutions experiencing growth of order $e^{\sqrt{k}t}$ in the frequencies:
\begin{itemize}
    \item { \it  The authors  $\dots$ construct $\mathcal{O}(k^{-\infty})$ approximate solutions that grow like $e^{\sqrt{k}t} $ for high frequencies $k$ in $x$},
    \item 
    {\it its linearisation around a special background flow has unstable solutions of similar form, but with $\sigma_k \sim \lambda \sqrt{k}$, for $ k \gg 1$ arbitrarily large and some positive $\lambda \in \mathbb R_+$.}
\end{itemize}
Because of this growth rate, one may wonder why  G\'erard-Varet and Dormy addressed the ill-posedness in Sobolev spaces rather than in any Gevrey-class $m$, with $m>2$. With this section, we want to highlight some quantitative aspects related to the proof in  \cite{GD2010}  that play a fundamental role in the present manuscript as well. Furthermore, in our case we build solutions of the hyperbolic extension~\eqref{LHP} with a dispersive relation of order $\sqrt[3]{k}$, thus these remarks reveal also our choice of Sobolev rather than Gevrey-class $m$, with $m>3$. 

\noindent 
Roughly speaking, the most compelling reason resides in the maximal lifespan for which the mentioned growths occur: it is proportional to $t \sim \ln(k)/\sqrt{k}$ for the Prandtl equation and to $t\sim   \ln(k)/\sqrt[3]{k}$ for the hyperbolic extension. To be more specific, we state the following corollary of Theorem~1 in  \cite{GD2010}, when applied to the autonomous system \eqref{eq:linearised-Prandtl-autonomous}.
\begin{corollary}[due to Theorem~1 in \cite{GD2010}]\label{cor:GV-Dormy}
    Let $U_s\in W^{4, \infty}_\alpha(\mathbb R_+)$ and assume that $U_s'(a) = 0$ and $U_s''(a) \neq 0$ for some $a>0$. Denote by $T$ the semigroup of \eqref{eq:linearised-Prandtl-autonomous} for analytic solutions (cf.~Proposition~1 in  \cite{GD2010}). There exists $\sigma>0$, such that  
    \begin{equation*}
        \sup_{0\leq t \leq \delta }
        e^{-\sigma t \sqrt{\partial_x}}
        \big\| T(t) \big\|_{\mathcal{L}(H^mW^{0, \infty}_\alpha, H^{m-\mu}
        W^{0, \infty}_\alpha)} = + \infty,
    \end{equation*}
    for all $\delta >0$, $m\geq 0$ and $\mu \in [0, 1/2)$.
\end{corollary}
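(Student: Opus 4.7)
The plan is to recycle the approximate solutions constructed by Gérard-Varet and Dormy in \cite{GD2010} and to quantify how the Gevrey-2 filter $e^{-\sigma t\sqrt{\partial_x}}$ fails to absorb the underlying $e^{\sigma_0\sqrt{k}\,t}$ amplification, provided $\sigma<\sigma_0$. Here $\sigma_0>0$ denotes the constant produced by the spectral problem at the critical point $y=a$ of $U_s$ in the parabolic setting (the analogue of the imaginary part of $\gamma$ in \Cref{lemma:W}, but for the equation $(1.7)$ of \cite{GD2010}).

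First, I recall that for each sufficiently large integer $k$, the construction in \cite{GD2010} produces a single-frequency initial datum $u_{\mathrm{in},k}(x,y)=e^{ikx}U_k(y)$ with $\|U_k\|_{W^{0,\infty}_\alpha}$ of unit order, together with an approximate solution $u^{\mathrm{app}}_k$ of \eqref{eq:linearised-Prandtl-autonomous} whose $W^{0,\infty}_\alpha$-profile grows exactly as $e^{\sigma_0\sqrt{k}\,t}$. The approximation error between $u^{\mathrm{app}}_k$ and the actual semigroup orbit $T(t)u_{\mathrm{in},k}$ is controlled by a negative power of $k$ on the natural lifespan $t_k^\ast\sim\log(k)/\sqrt{k}$ of the Ansatz: upon choosing $t_k^\ast=c\log(k)/\sqrt{k}$, the GD2010 machinery yields $\|T(t_k^\ast)u_{\mathrm{in},k}\|_{W^{0,\infty}_\alpha}\gtrsim k^{\sigma_0 c-N}$ for any prescribed $N\in\mathbb N$ (by refining the WKB expansion).

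Next, I renormalise so that $\|u_{\mathrm{in},k}\|_{H^m W^{0,\infty}_\alpha}=1$; since the datum lives purely at tangential frequency $k$, this amounts to rescaling the $y$-profile by a factor of order $k^{-m}$. The orbit $T(t)u_{\mathrm{in},k}$ remains supported at the single Fourier mode $k$, so
\begin{equation*}
\|T(t_k^\ast)u_{\mathrm{in},k}\|_{H^{m-\mu}W^{0,\infty}_\alpha}\gtrsim k^{m-\mu}\cdot k^{-m}e^{\sigma_0\sqrt{k}\,t_k^\ast}=k^{-\mu+\sigma_0 c},
\end{equation*}
and filtering by $e^{-\sigma t_k^\ast\sqrt{\partial_x}}$ introduces the factor $e^{-\sigma\sqrt{k}\,t_k^\ast}=k^{-\sigma c}$, hence
\begin{equation*}
\bigl\|e^{-\sigma t_k^\ast\sqrt{\partial_x}}T(t_k^\ast)u_{\mathrm{in},k}\bigr\|_{H^{m-\mu}W^{0,\infty}_\alpha}\gtrsim k^{-\mu+(\sigma_0-\sigma)c}.
\end{equation*}

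The main obstacle, which is already settled by the proof of Theorem~1 in \cite{GD2010}, is upgrading the approximate solution to the exact semigroup orbit on the window $[0,t_k^\ast]$ without damaging the exponential growth, through resolvent-type error estimates on the WKB expansion. Accepting this, I fix any $\sigma\in(0,\sigma_0)$ and any $c>\mu/(\sigma_0-\sigma)$, so that the last displayed lower bound diverges as $k\to\infty$. Since $t_k^\ast=c\log(k)/\sqrt{k}\to 0$, for every $\delta>0$ all sufficiently large $k$ satisfy $t_k^\ast\in(0,\delta)$, whence
\begin{equation*}
\sup_{0\leq t\leq\delta}\bigl\|e^{-\sigma t\sqrt{\partial_x}}T(t)\bigr\|_{\mathcal{L}(H^m W^{0,\infty}_\alpha,H^{m-\mu}W^{0,\infty}_\alpha)}=+\infty,
\end{equation*}
as claimed. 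The quantitative content of the statement is really an artifact of the very short inflation window $t_k^\ast\sim\log(k)/\sqrt{k}$ available in \cite{GD2010}: any filter of Gevrey index below $2$ is defeated, while filters at or above Gevrey-$2$ with sufficiently large prefactor would saturate the estimate, explaining why ill-posedness cannot be pushed to Gevrey-$m$, $m>2$, with this strategy.
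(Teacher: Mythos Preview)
Your argument is correct and follows the same underlying mechanism as the paper: exploit the $e^{\sigma_0\sqrt{k}\,t}$ growth supplied by the G\'erard-Varet--Dormy construction on the short window $t\sim\log(k)/\sqrt{k}$, and check that neither the Sobolev loss $k^{-\mu}$ nor the Gevrey filter $e^{-\sigma\sqrt{k}\,t}$ can absorb it when $\mu<1/2$ and $\sigma<\sigma_0$. The paper's proof differs only in organisation: it first isolates the quantitative operator-norm bound of \Cref{thm:refined-verion-GV-Dormy}, namely
\[
\sup_{0\le t\le \frac{\ln k}{2(\sigma_0-\sigma)\sqrt{k}}}e^{-\sigma t\sqrt{k}}\,\|T_k(t)\|_{\mathcal L(W^{0,\infty}_\alpha)}>\mathcal C_\sigma\, k^{1/2},
\]
proved by a Duhamel-based contradiction against the forced solution of \Cref{prop:main-prop-Prandtl}, and then simply multiplies by $k^{-\mu}$ and sends $k\to\infty$. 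You instead keep the free parameter $c$ in $t_k^\ast=c\log k/\sqrt{k}$ and invoke the WKB error control directly. Both routes give the same divergence; the paper's has the mild advantage that the passage from approximate to exact orbit is made explicit through the Duhamel contradiction, whereas your ``accepting this'' sentence defers precisely that step to \cite{GD2010}.

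One cosmetic slip: the displayed lower bound $\|T(t_k^\ast)u_{\mathrm{in},k}\|_{W^{0,\infty}_\alpha}\gtrsim k^{\sigma_0 c-N}$ is not what you mean---you want $\gtrsim k^{\sigma_0 c}$ up to an $O(k^{-N})$ error from the truncated expansion---and indeed in the subsequent display you use the correct factor $e^{\sigma_0\sqrt{k}\,t_k^\ast}=k^{\sigma_0 c}$.
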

\noindent 
The result of G\'erard-Varet and Dormy implies in particular that $T$ does not extend to an operator in the Sobolev space $H^mW^{0, \infty}_\alpha$. 
Following the proof in \cite{GD2010}, one can obtain in reality a more refined version of \Cref{cor:GV-Dormy}, which provides indeed an explicit lower bound of the semigroup $T_k$ (the projection of $T$ on the eigenspace generated by $e^{ikx}$), associated to
\begin{equation}\label{eq:linearised-Prandtl-autonomous-k}
    \partial_t u_k + ik U_s(y)  u_k + v_k U_s'(y) - \partial_y^2 u_k  = 0,
    \qquad 
    ik  u_k + \partial_y v = 0,
    \qquad (t,x,y)\in (0,T) \times \mathbb R_+,
\end{equation}
with $v_k = u_k = 0$ in $y = 0$, and an explicit upper bound for the time in which the instability occurs. 
\begin{theorem}\label{thm:refined-verion-GV-Dormy}
    Let $U_s\in W^{4, \infty}_\alpha(\mathbb R_+)$ and assume that $U_s'(a) = 0$ and $U_s''(a) \neq 0$ for some $a>0$. Then there exists $\sigma_0>0$ such that for all  $\sigma \in [0, \sigma_0)$ and $k\in \mathbb N$, the following inequality 
    \begin{equation}\label{eq:refined-verion-GV-Dormy}
        \sup_{0\leq t \leq \frac{1}{2(\sigma_0-\sigma)}
        \frac{\ln(k)}{\sqrt{k}}
        }
        e^{-\sigma t k^\frac{1}{2}}
        \big\|  T_k(t) 
        \big\|_{\mathcal{L}(W^{0, \infty}_\alpha ) }
        >
        \mathcal{C}_{\sigma}
        k^\frac{1}{2},
    \end{equation}
   holds true for a constant $\mathcal{C}_{\sigma}$, which depends uniquely upon $\sigma$ and $\sigma_0$.
\end{theorem}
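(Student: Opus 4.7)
The plan is to extract from the proof of Theorem 1 in \cite{GD2010} a fully quantitative statement that tracks both the time window and the amplification rate, and to note that all their estimates are in fact polynomial in $k$.

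First, I would reproduce the approximate–eigenmode construction of G\'erard-Varet and Dormy at frequency $k \gg 1$. Seeking solutions of \eqref{eq:linearised-Prandtl-autonomous-k} of the form $u_k(t,y) = e^{\tau_k t} U_k(y)$, and rescaling in a boundary layer of size $\epsilon_k = k^{-1/4}$ around the critical point $z = (y-a)/\epsilon_k$, the leading-order eigenvalue problem reduces to an ODE analogous to \eqref{eq:equation-of-W-tilde-intro} but with the \emph{parabolic} symbol of Prandtl in place of the wave-operator symbol; this is the content of $(1.7)$ in \cite{GD2010}. Its spectral analysis yields a complex number $\gamma$ with $\mathrm{Im}(\gamma)<0$ and a corresponding profile $W$, from which one deduces $\tau_k = ikU_s(a) + \sqrt{k}\,\lambda + \smallO(\sqrt{k})$, where $\lambda \in \mathbb C$ has $\mathrm{Re}(\lambda) = \sigma_0 > 0$. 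The profile $U_k(y)$ is $W((y-a)/\epsilon_k)$ multiplied by a cut-off enforcing the boundary condition at $y=0$ and decay at infinity; this profile is of unit size in $W^{0,\infty}_\alpha$ up to an explicit polynomial factor in $k$ that only affects the constant $\mathcal{C}_\sigma$.

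Next, I would perform a higher-order WKB correction $U_k = U_k^{(0)} + \epsilon_k U_k^{(1)} + \cdots + \epsilon_k^N U_k^{(N)}$ so that the associated residual $f_k$ (obtained by plugging the ansatz $u_k^{\mathrm{app}}(t,y) = e^{\tau_k t} U_k(y)$ into \eqref{eq:linearised-Prandtl-autonomous-k}) satisfies $\|f_k(t)\|_{W^{0,\infty}_\alpha} \leq C\,k^{-N}\, e^{\sigma_0 \sqrt{k}\, t}$. The remainder $R_k = u_k - u_k^{\mathrm{app}}$ solves \eqref{eq:linearised-Prandtl-autonomous-k} with source $-f_k$ and zero initial data. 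Invoking the semigroup bound for analytic initial data from Proposition 1 in \cite{GD2010}, together with Duhamel, one obtains
\[
\|R_k(t)\|_{W^{0,\infty}_\alpha} \leq C\, k^{-N}\, t\, e^{\sigma_0 \sqrt{k}\, t} \leq C\, k^{-N+1/2}\, e^{\sigma_0 \sqrt{k}\, t},\qquad 0 \leq t \leq \tfrac{\ln(k)}{\sqrt{k}}.
\]
Choosing $N$ sufficiently large compared to the polynomial loss in the construction of $U_k$ guarantees that $\|R_k(t)\|_{W^{0,\infty}_\alpha}$ is negligible compared to $\|u_k^{\mathrm{app}}(t)\|_{W^{0,\infty}_\alpha} \asymp e^{\sigma_0 \sqrt{k}\, t}$ throughout this window.

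Finally, evaluating at $t^\star = \frac{1}{2(\sigma_0 - \sigma)} \frac{\ln(k)}{\sqrt{k}}$, which lies inside the admissible window for every $\sigma \in [0,\sigma_0)$, one has
\[
e^{-\sigma \sqrt{k}\, t^\star}\, \|T_k(t^\star) U_k\|_{W^{0,\infty}_\alpha} \geq \tfrac{1}{2}\, e^{(\sigma_0 - \sigma) \sqrt{k}\, t^\star}\, \|U_k\|_{W^{0,\infty}_\alpha} = \tfrac{1}{2}\, k^{1/2}\, \|U_k\|_{W^{0,\infty}_\alpha},
\]
which, once normalised, yields \eqref{eq:refined-verion-GV-Dormy} with $\mathcal{C}_\sigma$ depending only on $\sigma,\sigma_0$. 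The main obstacle is the uniform control of the remainder up to time $\ln(k)/\sqrt{k}$: one must make the WKB expansion precise enough (track how many orders in $\epsilon_k$ are needed to beat the $e^{\sigma_0 \sqrt{k}\, t}$ growth by a factor of $k^{1/2}$) and, simultaneously, ensure that all implicit constants inherited from \cite{GD2010}, in particular those coming from the analytic semigroup bound in Proposition 1 therein, remain uniform in $k$. Apart from this bookkeeping, the argument is a direct quantitative reading of \cite{GD2010}.
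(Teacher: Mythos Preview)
Your proposal has a genuine gap in the Duhamel step. You write that the remainder $R_k$ can be controlled by ``invoking the semigroup bound for analytic initial data from Proposition~1 in \cite{GD2010}'', so as to obtain $\|R_k(t)\|_{W^{0,\infty}_\alpha} \leq C k^{-N} t\, e^{\sigma_0\sqrt{k}\,t}$. But Proposition~1 in \cite{GD2010} only asserts well-posedness of the semigroup in an \emph{analytic} class in $x$; it does not yield, at a fixed frequency $k$, an a~priori estimate of the form $\|T_k(t)\|_{\mathcal L(W^{0,\infty}_\alpha)} \leq Ce^{\sigma_0\sqrt{k}\,t}$. The naive bound for \eqref{eq:linearised-Prandtl-autonomous-k} is only $\|T_k(t)\| \leq e^{Ckt}$ because of the nonlocal term $k\,v_k\,U_s'$, and with that bound your Duhamel integral explodes at time $t \sim \ln(k)/\sqrt{k}$ no matter how large $N$ is chosen. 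An a~priori upper bound of the type you need is essentially the Gevrey-$2$ well-posedness of Prandtl, a much deeper and later result, so assuming it here is not just bookkeeping.

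The paper's proof (which is also how \cite{GD2010} itself proceeds, cf.~page~602 there) sidesteps this completely by arguing \emph{by contradiction}: one supposes that \eqref{eq:refined-verion-GV-Dormy} fails at some $k$, i.e.\ that $e^{-\sigma t\sqrt{k}}\|T_k(t)\|_{\mathcal L} \le \mathcal C_\sigma k^{1/2}$ throughout the window, and then uses \emph{this very hypothesis} as the semigroup bound inside Duhamel. With the single-term approximate mode of \Cref{prop:main-prop-Prandtl} (no higher-order WKB needed), whose residual satisfies only $k\|\mathcal R_k\|_{W^{0,\infty}_\alpha}\le \mathcal C_{\mathcal R}$, the assumed bound gives
\[
\int_0^t \|T_k(t-s)\|\cdot k\|\mathcal R_k\|\,e^{\sigma_0\sqrt{k}s}\,ds \ \le \ \frac{\mathcal C_\sigma\,\mathcal C_{\mathcal R}}{\sigma_0-\sigma}\,e^{\sigma_0\sqrt{k}t},
\]
which for the specific choice $\mathcal C_\sigma = \tfrac{\smallc}{2}\tfrac{\sigma_0-\sigma}{(\sigma_0-\sigma)+\mathcal C_{\mathcal R}}$ is strictly less than $\|u_k^{\mathrm{fr}}(t)\| \ge \smallc\,e^{\sigma_0\sqrt{k}t}$; evaluating at $t = \tfrac{1}{2(\sigma_0-\sigma)}\tfrac{\ln k}{\sqrt k}$ then yields the numerical contradiction $1/2\ge 1$. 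The right repair of your argument is therefore not to push the WKB order higher but to turn the direct estimate into a contradiction; the missing a~priori bound is then supplied for free, and only the leading-order approximate mode is required.
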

    \noindent 
    \Cref{thm:refined-verion-GV-Dormy} implies in particular \Cref{cor:GV-Dormy}. Indeed, at any small time $t= \delta>0$, we may consider a sufficiently large frequency ${\bf k}\gg 1$ with $\ln({\bf k})/\sqrt{{\bf k}}\leq 2(\sigma-\sigma_0) \delta$, so that
    \begin{equation*}
    \begin{aligned}
        \sup_{0\leq t \leq \delta }
        e^{-\sigma t \sqrt{\partial_x}}
        \big\| T(t) \big\|_{\mathcal{L}(H^mW^{0, \infty}_\alpha, H^{m-\mu}
        W^{0, \infty}_\alpha)}
        &= 
        \sup_{0\leq t \leq \delta }
        \sup_{k \in \mathbb Z}
        k^{-\mu }
        e^{-\sigma t k^\frac{1}{2}}
        \big\|  T_k(t) 
        \big\|_{\mathcal{L}\big(
        W^{0, \infty}_\alpha,  W^{0, \infty}_\alpha  \big) }\\
        & 
        \geq
        \sup_{0\leq t \leq \delta }
        \sup_{k \geq  {\bf k}}
        k^{\frac 12 -\mu }
        e^{-\sigma t k^\frac{1}{2}}
        = + \infty.
    \end{aligned} 
    \end{equation*}
    Remarkably, \Cref{thm:refined-verion-GV-Dormy} provides an explicit inflation of the norms at any frequency $k\in \mathbb N$, as well as a related maximum lifespan. This time is proportional to $t \sim \ln(k)/\sqrt{k}$ and was somehow already observed in the proof of \cite{GD2010} (cf.~pag.~602 in \cite{GD2010}, where the authors obtained a contradiction considering a time $t \gg  \frac{\mu}{\sigma-\sigma_0}| \ln(\ee)|\sqrt{\ee}$, with $\ee = 1/k$). 

    \noindent 
    On a more technical level, the proof in \cite{GD2010} develops around the following ansatz for a velocity field $u(t,x,y) =  e^{i k (-U_s(a) + \tau k^{-1/2})t + ik x}\hat{u}_k(y)$, where $\tau\in \mathbb C$ and
    $\sigma_0:={\mathcal{I}m(\tau)}<0$. This function $u$ experiences therefore a-priori a growth as $e^{\sigma_0 \sqrt{k}t}$, at any time $t>0$.     
    However, we shall emphasise that this flow is a solution $u^{\rm fr}= u^{\rm fr}_k(t,y)e^{ik x}$ of a ``forced'' version of the Prandtl equations,
    which depends on a non-trivial remainder $r_k = \mathcal{R}_k(t,y)e^{ikx}$ (cf.~(4.2) in \cite{GD2010}, with  $\ee = 1/k$ and $u_\ee=u^{\rm fr}$). 
    Hence, in order to transfer the instability of $u^{\rm fr}$ to a homogeneous solution $u$ of Prandtl, the authors invoked the Duhamel's identity
    \begin{equation*}
        u^{\rm fr}_k(t,y) - u_k(t,y) = \int_0^t T_k(t-s)(\mathcal{R}_k(s))(y) ds,
    \end{equation*}
    (cf.~the identity below (4.2) in \cite{GD2010}, where $\tilde U_\ee = u^{\rm fr}_k$ and  $U_\ee= u_k$). Roughly speaking, $u_k$ behaves similarly as $u^{\rm fr}_k$, as long as the integral at the r.h.s~of Duhamel is sufficiently small. \Cref{thm:refined-verion-GV-Dormy} translates this smallness relation in terms of the semigroup $T_k$ and the maximal lifespan $t  = \frac{1}{2(\sigma_0-\sigma)} \frac{\ln(k)}{\sqrt{k}}$. We obtain a similar result also to the hyperbolic extension \eqref{LHP} (cf.~\Cref{sec:outline-of-the-proof} and \Cref{lemma:inflation-of-Tk}).

\begin{remark}
    Finally, we shall remark that the norm inflation of \Cref{thm:refined-verion-GV-Dormy} is inefficient in providing ill-posedness in $\mathcal{G}_\sigma^mW^{0, \infty}_\alpha$ (Gevrey-class $m$ in $x\in \mathbb T$), with $m>2$. The analytic semigroup $T(t)$ never extends to an operator from $\mathcal{G}_\sigma^mW^{0, \infty}_\alpha$ into $\mathcal{G}_\eta^mW^{0, \infty}_\alpha$ (with a smaller radius of regularity $\eta\in [0,\sigma)$) if and only if
    \begin{equation}\label{est:ill-posedness-in-Gevrey}
        \sup_{k \in \mathbb Z}
        \sup_{0\leq t \leq \delta } e^{(\eta-\sigma) k^{\frac{1}{m}} } \| T_k(t) \|_{\mathcal{L}(W^{0, \infty}_\alpha)} = + \infty,
    \end{equation}
    for any $ \delta>0$. Inequality \eqref{eq:refined-verion-GV-Dormy} does not automatically imply \eqref{est:ill-posedness-in-Gevrey}, since it would require that \eqref{eq:refined-verion-GV-Dormy} holds true at a time $t \sim \frac{\sigma-\eta}{\sigma} k^{\frac 1m - \frac{1}{2}}$, which satisfies  $e^{(\eta-\sigma) k^{\frac 1m } + \sigma t k^{ \frac 12}}\sim 1$. The upper bound $ t \leq  \frac{1}{2(\sigma_0-\sigma)} \frac{\ln(k)}{\sqrt{k}}$ is hence too restrictive for the ill-posedness of the  linearised Prandtl equations in any Gevrey-class $m$, with $m>2$. 
\end{remark}

\noindent 
On the other hand, if we allow $U_s(y)$ to be unbounded in $y\in [0,\infty)$, we may consider a specific shear flow of the form $U_s(y) = (y-a)^2/2$, which satisfies indeed the assumptions of \Cref{thm:refined-verion-GV-Dormy} at $y = a$ (but does not decay exponentially as $y\to \infty$). In this case, the forced solution $u^{\rm fr}_k$ (derived in the proof of \cite{GD2010}) corresponds in reality to an exact solution of the Prandtl equation: $u^{\rm fr}_k = u_k$ (the forcing term $\mathcal{R}_k$ vanishes at any time for this specific choice of $U_s$, cf.~\Cref{remark:it-might-be-true}). In this case, the growth $e^{\sigma_0 \sqrt{k}t}$ holds true at any time $t>0$, which therefore implies ill-posedness in any Gevrey-class $m$, with $m>2$, along the tangential direction $x\in \mathbb T$. However, also in this case there are some drawbacks: the built solution $u_k $ is unbounded and diverge to $\infty$  as $y\to \infty$. This increase in $y\in [0,\infty)$ is a reminiscent of the analogous behaviour of $U_s(y) = (y-a)^2/2$, making the Prandtl solution somehow unphiysical. Nevertheless, it might suggest that the time barrier $t\sim \ln(k)/\sqrt{k}$ is (momentarily) only mathematical and that the dispersion rate might hold true at any time $t>0$ also when considering non-monotonic shear flows $U_s(y)$ that decade as $y\to \infty$. 


\noindent 
The main goal of this paper is to address the ill-posedness of the hyperbolic extension \eqref{LHP} in Sobolev spaces. We however provide a short proof of \Cref{thm:refined-verion-GV-Dormy}
, as further support of the pioneeric work of G\'erard-Varet and Dormy in \cite{GD2010} (cf.~\Cref{sec:proof-of-GV-theorem}).

\section{Outline of the proof}\label{sec:outline-of-the-proof}
In this section, we illustrate the general principles that we set as basis of our proof, and we postpone the technical details to the remaining paragraphs. Our proof develops along three major axes:
\begin{enumerate}[(i)]
    \item We project the main equation \eqref{LHP} into frequency eigenspaces in $x\in \mathbb T$ (cf.~\eqref{LHP-k-s}) and we perturb the resulting equations with some meaningful forcing terms (cf.~\eqref{LHP-k} and \Cref{prop:main-prop}). This ensures a specific exponential growth (typical of Gevrey-class 3 regularities) on certain inhomogeneous solutions. In particular, this exponential growth holds true, globally in time.
    \item Locally in time, we transpose the growth described in (i) to the original homogeneous equation. Contrarily to (i), this holds uniquely for a very short time, which depends in particular upon the frequency of each eigenspace (cf.~\Cref{lemma:inflation-of-Tk}).
    \item  Finally, making use of the homogeneous solutions built in (ii), we derive an inflation of the Sobolev norms of the solutions of \eqref{LHP} as described by \Cref{thm:ill-posedness-Sobolev}.
\end{enumerate}
We postpone the major aspects of part (i) to \Cref{sec:proof-of-proposition-inhomogeneous} and we devote this section to some underlying remarks and the details of parts (ii) and (iii). 

\subsection*{(i) The inhomogeneous equation and the global-in-time growth of Gevrey-3 type}
We first formally project the main equation \eqref{LHP} to the eigenspace of each positive frequency $k\in \mathbb N$ in the $x$-variable, by means of the Ansatz 
\begin{equation*}
    u(t,x,y) = e^{ik x} u_k(t,y),\quad v(t,x,y) = k e^{ik x} v_k(t,y),\quad 
    (t,x,y)\in (0,T) \times \mathbb T \times \mathbb R_+.
\end{equation*}
We hence look for a suitable family of $(t,y)$-dependent profiles $u_k$, 
which solve the homogeneous system
\begin{equation}
    \label{LHP-k-s}
    \system{
    \begin{alignedat}{4}
    & ( \partial_t +1)\big(
    \partial_t u_k  + ik U_{{\rm s}} u_k +  k v_k U'_{{\rm s}}\big) - \partial_{y}^2 u_k =0
    \qquad (t,y)\in \,
    &&(0,T) \times \mathbb R_+,\\
    &\, i  u_k + \partial_y v_k =0
    &&(0,T) \times \mathbb R_+,
    \\
    & \left. \pare{ u_k, \partial_t u_{k}}\right|_{t=s} =\pare{u_{k,\rm in}, u_{k,t,\rm in}}
    &&\hspace{1.425cm}  \mathbb R_+,
    \\
    & \left.  u_k\right|_{y=0}=0,\quad  \left.  v_k \right|_{y = 0} = 0
     &&(0,T).
    \end{alignedat}
    } 
\end{equation}
Since this equation is (roughly) a linear damped wave equation at each fixed frequency $k\in \mathbb N$, we infer that any initial data $(u_{k,\rm in}, u_{k,t,\rm in})\in W^{1, \infty}_\alpha \times W^{0, \infty}_\alpha$ generates a unique global-in-time weak solution $u_k\in L^\infty(0,T;W^{0, \infty}_\alpha)$, for any lifespan $T>0$. 
Furthermore, we also infer that $u_k$ can be written in terms of a semigroup $T_k$
\begin{equation*}
   u_k(t) = T_k(t)\pare{u_{k,\rm in}, u_{k,t,\rm in}},
   \quad 
   \text{with}
   \quad 
   T_k(t) 
   :W^{1, \infty}_\alpha \times W^{0, \infty}_\alpha
   \to W^{0, \infty}_\alpha,
\end{equation*}
for any $T>0$ and any $t\in (0,T)$. Certainly, $u_k$ satisfies additional regularities, however  our central goal is to estimate $u_k$ in $ L^\infty(0,T;W^{0, \infty}_\alpha)$ and to determine a suitable growth of $\|u_k(t) \|_{W^{0, \infty}_\alpha}$ as time increases. 


\noindent
As  depicted in part $(i)$, we momentarily allow for perturbation of equation \eqref{LHP-k-s} by means of a general forcing term $f_k\in L^\infty(0,T;W^{0, \infty}_\alpha)$. To avoid confusion in the notation, we set this inhomogeneous version in the  state variable $u_k^{\rm fr}=u_k^{\rm fr}(t,y)$, which reflects a ``forced'' version of the hyperbolic Prandtl equation: 
\begin{equation}
    \label{LHP-k}
    \system{
    \begin{alignedat}{4}
    & ( \partial_t +1)\big(
    \partial_t u_k^{\rm fr}  + ik U_{{\rm s}} u_k^{\rm fr} +  k v_k^{\rm fr} U'_{{\rm s}}\big) - \partial_{y}^2 u_k^{\rm fr} =f_k,
    \qquad (t,y)\in \,
    &&(0,T) \times \mathbb R_+,\\
    &\, i u_k^{\rm fr} + \partial_y v_k^{\rm fr} =0
    &&(0,T) \times \mathbb R_+,
    \\
    & (u_k^{\rm fr}, \partial_t u_{k}^{\rm fr})|_{t=0} =\pare{u_{k,\rm in}, u_{k,t,\rm in}}
    &&\hspace{1.425cm}  \mathbb R_+,
    \\
    & \left. u_k\right|_{y=0}=0,\quad  \left. v_k \right|_{y = 0} = 0
     &&(0,T).
    \end{alignedat}
    } 
\end{equation}
We hence aim to determine a meaningful non-trivial forcing term $f_k$, such that \eqref{LHP-k} admits a solution $u_k^{\rm fr}$ of \eqref{LHP-k}, whose norm $\| u_k^{\rm fr}(t) \|_{W^{0,\infty}_\alpha}$ experiences an exponential growth in time as $e^{\sigma_0 t k^{1/3}}$, for a suitable constant $\sigma_0>0$. This growth holds true at any time $t \in (0,T)$, as described by the following proposition.

\begin{prop}\label{prop:main-prop}
Assume that the shear flow $U_s= U_s(y)$ is in $W^{4,\infty}_\alpha(0,\infty)$ and satisfies the relations $U_s(a) = U_s'(a) = 0$ together with $U_s''(a) \neq  0$, for a given $a\in (0, \infty)$. For any positive frequency $k\in \mathbb N$, there exist two non-trivial profiles $\mathbb U_k,\,\mathcal{R}_k :\mathbb R_+ \to \mathbb C$ in $W^{2, \infty}_\alpha (\mathbb R_+)$ and $W^{0, \infty}_\alpha (\mathbb R_+)$, respectively, and there exists a complex number $\tau \in \mathbb C$ only depending on $U_s$ with negative imaginary part ${\mathcal{I}m(\tau)}<0$, such that the initial data and the forcing term
\begin{equation}\label{ansatz:main-proposition}
     u_{k,\rm in}(y) =  \mathbb{U}_k(y),\quad 
     u_{k,t,\rm in}(y) =  i\tau k^\frac{1}{3}\mathbb{U}_k(y),\quad 
     f_k(t,y)  = -k^\frac{4}{3}e^{i\tau k^{\frac{1}{3}}t}\mathcal{R}_k(y),
     \quad 
     (t,y)\in (0,T)\times \mathbb R_+,
\end{equation}
generate a global-in-time solution $u_k^{\rm fr}\in L^\infty(0,T;W^{0, \infty}_\alpha)$ of \eqref{LHP-k}, which can be  written explicitly in the following form
\begin{equation}\label{eq:explicit-form-ukfr}
\begin{aligned}
    u_k^{\rm fr}(t,y) &= e^{i \tau k^{\frac{1}{3}}t} \mathbb{U}_k(y),\quad 
    v_k^{\rm fr}(t,y) = k e^{i \tau k^{\frac{1}{3}}t} \mathbb{V}_k(y),
    \quad\text{with}\quad  
    \mathbb{V}_k(y) :=-i \int_0^y\mathbb{U}_k(z)dz.
\end{aligned}
\end{equation}
The sequences  $(\mathbb U_k )_{k\in \mathbb N} $ is in $W^{2, \infty}_\alpha (\mathbb R_+)$ and it is uniformly bounded from above and below in $W^{0, \infty}_\alpha (\mathbb R_+)$, namely there exist two constants $\smallc,\mathcal{C}>0$ such that
\begin{equation}\label{uniform-estimate-for-Uk}
0<\smallc \leq 
    \| \mathbb{U}_k \|_{W^{0, \infty}_\alpha(\mathbb R_+)}
\leq \mathcal{C}<\infty.
\end{equation}
Furthermore, at high frequencies, the sequence $( k^\frac{4}{3}\mathcal R_k)_{k\in \mathbb N} $ is uniformly bounded in $W^{0, \infty}_\alpha (\mathbb R_+)$: there exists a constant $\mathcal{C}_\mathcal{R}>0$, which depends uniquely on the shear flow $U_s$, such that
\begin{equation}\label{uniform-bound-on-Rk}
    k^\frac{4}{3}\| \mathcal{R}_k \|_{W^{0, \infty}_\alpha} 
    \leq 
    \mathcal{C}_\mathcal{R}<\infty,\qquad \text{for any }k\in \mathbb N.
\end{equation}
\end{prop}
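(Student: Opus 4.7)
\emph{Reducing to a spatial ODE.} The plan is to substitute the ansatz \eqref{ansatz:main-proposition} into the forced equation \eqref{LHP-k}. Factoring out the time-exponential $e^{i\tau k^{1/3}t}$ and introducing the primitive $\Psi_k(y):=\int_0^y \mathbb{U}_k(z)\,dz$ — which automatically encodes $\mathbb{V}_k=-i\Psi_k$ together with the boundary conditions $\Psi_k(0)=\Psi_k'(0)=0$ — reduces the problem to the spatial ODE
\begin{equation*}
(i\tau k^{1/3}+1)\bigl[i\tau k^{1/3}\Psi_k' + ik\bigl(U_s\Psi_k' - U_s'\Psi_k\bigr)\bigr] - \Psi_k''' \; = \; -k^{4/3}\mathcal{R}_k(y),
\end{equation*}
where $\mathcal{R}_k$ is the residual produced by the substitution. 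The task then becomes to construct $\Psi_k$ (hence $\mathbb{U}_k=\Psi_k'$) so that $\mathcal{R}_k$ is of size $O(k^{-4/3})$ in $W^{0,\infty}_\alpha$, while $\mathbb{U}_k$ meets the uniform bounds \eqref{uniform-estimate-for-Uk}.

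\emph{Boundary-layer analysis and reduction to \eqref{eq:equation-of-W-tilde-intro}.} I would then perform a critical-layer analysis near $y=a$, introducing the inner variable $z=k^{1/3}(y-a)$ and writing $\Psi_k(y)=k^{-1/3}\Phi(z)$. Using the Taylor expansion $U_s(y)\approx \tfrac{1}{2}U_s''(a)(y-a)^2$, the leading $O(k^{2/3})$ terms of the equation collapse into the reduced ODE
\begin{equation*}
\Phi''' + \tau U_s''(a)\Bigl[\tfrac{z^2}{2}\Phi' - z\Phi\Bigr] + \tau^2\Phi' = 0.
\end{equation*}
Setting $\gamma := -2\tau/U_s''(a)$ and performing the change of unknown $\Phi(z)=(\gamma-z^2)V(z)$, an elementary but lengthy manipulation shows that this transforms into
\begin{equation*}
\frac{d^3}{dz^3}\bigl[(\gamma-z^2)V\bigr] + \gamma\,\tfrac{U_s''(a)^2}{4}(\gamma-z^2)^2\, V' \;=\; 0,
\end{equation*}
which, after the linear rescaling $z=\lambda\tilde z$, $\tilde\gamma=\gamma/\lambda^2$ with an appropriate complex cube root $\lambda^3=2/U_s''(a)$ (selected so as to enforce $\mathrm{Im}(\tau)<0$), coincides exactly with \eqref{eq:equation-of-W-tilde-intro}. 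The profile $V$ is then inherited from the function $W$ of \Cref{lemma:W}, with asymptotics $V(-\infty)=0$ (with Airy-type exponential decay) and $V(+\infty)=1$.

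\emph{Matched asymptotics, estimates and the main obstacle.} The global profile $\mathbb{U}_k=\Phi'(k^{1/3}(\cdot-a))$ is completed by combining this inner piece with an outer corrector via a smooth cutoff $\chi$ localized on a scale $k^{-\beta}$ around $y=a$, with $\beta\in(0,1/3)$ chosen to balance the various error contributions; the outer corrector enforces $\Psi_k(0)=0$ (the boundary datum is already exponentially small thanks to the decay $V(z)\to 0$ as $z\to -\infty$) as well as the exponential decay in $y\to\infty$ required by the weight $e^{\alpha y}$. The uniform lower bound in \eqref{uniform-estimate-for-Uk} follows from $V(+\infty)=1$, yielding $|\Phi'(z)|\sim 2|z|$ in the matching region, while the $W^{2,\infty}_\alpha$-upper bound is a consequence of standard Airy-type estimates on $V,V',V''$. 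The main obstacle will be the quantitative control of $\mathcal{R}_k$ in $W^{0,\infty}_\alpha$: after multiplication by the prefactor $k^{4/3}$, each of the three sources of error — subleading Taylor corrections of $U_s$ and $U_s'$, commutators with the cutoff $\chi$, and the outer boundary corrector — must produce $W^{0,\infty}_\alpha$-contributions of order $O(1)$, ensuring $\|\mathcal{R}_k\|_{W^{0,\infty}_\alpha}=O(k^{-4/3})$. Tuning $\beta$ against the exponential decay rate of $V$ at $-\infty$ is the technical core of the construction, carried out in \Cref{sec:proof-of-proposition-inhomogeneous}.
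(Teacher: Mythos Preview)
Your reduction to the inner ODE and its connection to \eqref{eq:equation-of-W-tilde-intro} is correct, but the global construction you outline---an inner profile glued to an outer corrector by a smooth cutoff $\chi$ at scale $k^{-\beta}$, with $\beta$ to be tuned---is not how the paper proceeds, and that choice makes the analysis heavier than necessary.

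The paper uses no cutoff. The key observation is that the outer equation $U_s v' - U_s' v = 0$ admits the explicit distributional solution $v_a(y)=H(y-a)U_s(y)$ (thanks to $U_s(a)=U_s'(a)=0$). The global ansatz is then written down directly as
\[
\mathbb V_\varepsilon(y)=H(y-a)\bigl(U_s(y)+\varepsilon^{2/3}\tau\bigr)+\varepsilon^{2/3}\,V\!\left(\tfrac{y-a}{\varepsilon^{1/3}}\right),\qquad \varepsilon=1/k,
\]
with an inner corrector $V$ that \emph{decays to $0$ at both ends}. The quadratic growth you see in $\Phi=(\gamma-z^2)W$ as $z\to+\infty$ is not truncated; it is subtracted exactly and absorbed into the outer piece $H(y-a)U_s(y)$, whose Taylor expansion near $a$ reproduces $H(\tilde z)\,\tfrac{U_s''(a)}{2}\tilde z^2$. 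Concretely $V(\tilde z)=\bigl(\tau+\tfrac{U_s''(a)}{2}\tilde z^2\bigr)\bigl(\tilde W(\tilde z)-H(\tilde z)\bigr)$, and the Gaussian decay of $\tilde W-H$ on each half-line does all the work. This buys two things: there are no cutoff commutators, so the remainder $\mathcal R_k$ is written in closed form and bounded by elementary Taylor estimates on $U_s$ combined with the decay of $W-H$ and $W'$; and there is no parameter $\beta$ to optimise---what you flag as the ``technical core'' simply does not arise.

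A smaller difference: your lower bound on $\|\mathbb U_k\|_{W^{0,\infty}_\alpha}$ via $|\Phi'(z)|\sim 2|z|$ in the matching region is not the paper's argument. In the paper's decomposition the inner contributions to $\mathbb U_k$ actually vanish as $k\to\infty$; the lower bound comes from the uniform convergence $\mathbb U_k\to iU_s'(y)H(y-a)$ in $W^{0,\infty}_\alpha$, which is nontrivial because $U_s'\not\equiv 0$ on $(a,\infty)$.
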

\begin{remark}
   Denoting by $\sigma_0:=-{\mathcal{I}m(\tau)}>0 $, \Cref{prop:main-prop} implies that $\| u^{\rm fr}_k(t) \|_{W^{0, \infty}_\alpha}\sim e^{\sigma_0 k^{1/3}t}$, at any $t\in (0,T)$, thanks to the explicit form \eqref{eq:explicit-form-ukfr} of the forced solution $u_k^{\rm fr}$. Indeed, invoking also the uniform estimates from below in \eqref{uniform-estimate-for-Uk}, we gather
    \begin{equation}\label{uniform-bound-of-ukfr}
        \| u_k^{\rm fr}(t) \|_{W_\alpha^{0, \infty}(\mathbb R_+)} = e^{-{\mathcal{I}m(\tau)} k^{\frac{1}{3}}t} \| \mathbb U_k  \|_{ W_\alpha^{0, \infty}(\mathbb R_+)}
        \geq 
        \smallc 
        e^{\sigma_0 k^{\frac{1}{3}}t},\qquad 
        \text{for any }t \in (0,T).
    \end{equation}
    Unfortunately, this method of determining an explicit solution as in \eqref{eq:explicit-form-ukfr} seems to work uniquely for the inhomogeneous system \eqref{LHP-k} and it is somehow inefficient with its homogeneous counterpart \eqref{LHP} (thus with the original system).    
    This is mainly due to the sequence of remainders $(\mathcal R_k)_{k\in \mathbb N}$, which define non-trivial forces $(f_k)_{k\in \mathbb N}$ in \eqref{eq:explicit-form-ukfr}. From \eqref{eq:explicit-form-ukfr} and \eqref{uniform-estimate-for-Uk}, 
    $\| f_k(t) \|_{W^{0, \infty}_\alpha}$ does not vanish (a-priori) as $k\to \infty$. We will determine indeed an explicit form of $\mathcal R_k$ (cf.~\eqref{eq:explicit-Rk}), which implies in particular
    \begin{equation*}
        \lim_{k\to \infty }k^\frac{4}{3}\mathcal{R}_k(y)  = i H(y -a) U_s'''(y)\neq 0,
        \quad \text{for all}\quad y>0\quad \text{with}\quad y \neq a,
    \end{equation*}
    where $H$ stands for the Heaviside step function. The contribution of $f_k$ to the global-in-time instability seems to be not negligible.  
    However, we show in $(ii)$ that we can still obtain a similar result with the homogeneous equation, at least for a very short time that depends on the frequencies.
\end{remark}
\noindent 
Since the proof of \Cref{prop:main-prop} is rather technical, we postpone it to \Cref{sec:proof-of-proposition-inhomogeneous}. We anticipate that it follows a similar approach as the one  used in \cite{GD2010}: we plug the ansatz \eqref{ansatz:main-proposition} to the main equations \eqref{LHP-k}, we analyse the asymptotic limit as frequencies $k \to \infty$ and we reduce the problem to a ``spectral condition'' on a related ODE. 
We shall hence devote the remaining parts of this section to address the details of $(ii)$ and $(iii)$ and thus to the proof of \Cref{thm:ill-posedness-Sobolev}. 

\subsection*{(ii) An instability of the homogeneous equation at a short time $t\sim \ln(k)/\sqrt[3]{k}$}

Based on the result given by \Cref{prop:main-prop}, we aim to obtain a similar instability for the homogeneous system \eqref{LHP-k-s}, which will lead in $(iii)$ to the ill-posedness of \eqref{LHP} in Sobolev spaces. 
We first invoke the following Duhamel's formula, which relates a general forced solution $u_k^{\rm fr}$ of \eqref{LHP-k} with the semigroup $T_k$ of the homogeneous system \eqref{LHP-k-s}:
\begin{equation}\label{Duhamel-formula}
    u_k^{\rm fr}(t,y) = 
    \underbrace{T_k(t)(u_{k, \rm in}, u_{k,t,\rm in})(y)}_{=:u_k(t,y)}
    + 
    \int_0^t
     T_k(t-s)(0, f_k(s))(y)ds.
\end{equation}
Here we shall interpret $u_k$ as the unique solution of the homogeneous problem \eqref{LHP-k-s}, with same initial data of $u_k^{\rm fr}$. Roughly speaking, 
in order to transfer the instability of $u_k^{\rm fr}$ to $u_k$, we need to ensure that the integral on the r.h.s.~of \eqref{Duhamel-formula} remains sufficiently small. With the following lemma, we translate  this condition directly as a property of the semigroup $T_k(t)$ at a time $t$, which is (at maximum) proportional to $\ln(k)/\sqrt[3]{k}$ (thus a time that vanishes as the frequency $k\to \infty)$.  
\begin{lemma}\label{lemma:inflation-of-Tk}
    Assume that the shear flow $U_s= U_s(y)$ is in $W^{3,\infty}_\alpha(0,\infty)$ and satisfies the relations $U_s(a) = U_s'(a) = 0$ together with $U_s''(a) \neq  0$, for a given $a\in (0, \infty)$. Let $\tau\in \mathbb C$ be as in \Cref{prop:main-prop} and denote by $\sigma_0:=-{\mathcal{I}m(\tau)}>0$. 
    For any $k\in \mathbb N$ and any $\sigma \in (0, \sigma_0)$ the following inequality holds true
    \begin{equation}\label{est:lemma-Tk-semigrouo-sup}
        \sup_{0\leq t \leq \frac{1}{3(\sigma_0-\sigma)}
        \frac{\ln(k)}{\sqrt[3]{k}}
        }
        e^{-\sigma t k^\frac{1}{3}}
        \big\|  T_k(t) 
        \big\|_{\mathcal{L}\big(
        W^{1, \infty}_\alpha 
        \times 
        W^{0, \infty}_\alpha,  W^{0, \infty}_\alpha  \big) }
        >
        \mathcal{C}_{\sigma}
        k^\frac{1}{3}\quad
        \text{with}\quad 
        \mathcal{C}_{\sigma}
        :=\frac{\smallc}{2}
        \frac{ (\sigma_0-\sigma)}{(\sigma_0-\sigma) + \mathcal{C}_R},
    \end{equation}
    where the constants $\smallc$ and $\mathcal{C}_R$ are as in \Cref{prop:main-prop}.
\end{lemma}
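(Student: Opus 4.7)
The plan is to argue by contradiction and exploit the Duhamel identity \eqref{Duhamel-formula} to transfer the global-in-time exponential growth of the forced solution built in \Cref{prop:main-prop} onto the homogeneous semigroup $T_k$. Setting $t^* := \ln(k)/(3(\sigma_0-\sigma) k^{1/3})$, I would assume, contrary to \eqref{est:lemma-Tk-semigrouo-sup}, that $\|T_k(t)\|_{\mathcal L} \leq \mathcal{C}_\sigma k^{1/3} e^{\sigma t k^{1/3}}$ for every $t\in[0,t^*]$. Denoting by $u_k := T_k(\cdot)(u_{k,\rm in}, u_{k,t,\rm in})$ the homogeneous solution associated with the initial data $(\mathbb{U}_k, i\tau k^{1/3}\mathbb{U}_k)$ prescribed in \eqref{ansatz:main-proposition}, the Duhamel identity \eqref{Duhamel-formula} relates it to the forced solution $u_k^{\rm fr}$ through
\begin{equation*}
u_k(t) = u_k^{\rm fr}(t) - \int_0^t T_k(t-s)(0, f_k(s))\, ds.
\end{equation*}

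First I would estimate the Duhamel remainder. The explicit form of $f_k$ in \eqref{ansatz:main-proposition}, together with the uniform bound \eqref{uniform-bound-on-Rk}, yields $\|f_k(s)\|_{W^{0,\infty}_\alpha} \leq \mathcal{C}_\mathcal{R} e^{\sigma_0 s k^{1/3}}$. Plugging this into the contradiction hypothesis, the polynomial factors $k^{1/3}$ cancel upon integration in $s$, leaving
\begin{equation*}
\Big\|\int_0^t T_k(t-s)(0, f_k(s))\, ds\Big\|_{W^{0,\infty}_\alpha} \leq \mathcal{C}_\sigma \mathcal{C}_\mathcal{R} k^{1/3} e^{\sigma t k^{1/3}} \int_0^t e^{(\sigma_0-\sigma) s k^{1/3}}\, ds \leq \frac{\mathcal{C}_\sigma \mathcal{C}_\mathcal{R}}{\sigma_0-\sigma} e^{\sigma_0 t k^{1/3}}.
\end{equation*}
The expression $\mathcal{C}_\sigma = (c/2)(\sigma_0-\sigma)/((\sigma_0-\sigma)+\mathcal{C}_\mathcal{R})$ is engineered precisely so that the prefactor $\mathcal{C}_\sigma \mathcal{C}_\mathcal{R}/(\sigma_0-\sigma)$ is strictly less than $c/2$. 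Combining this with the explicit lower bound \eqref{uniform-bound-of-ukfr} on $u_k^{\rm fr}$ and the reverse triangle inequality transfers the exponential growth to the homogeneous solution:
\begin{equation*}
\|u_k(t)\|_{W^{0,\infty}_\alpha} \geq \|u_k^{\rm fr}(t)\|_{W^{0,\infty}_\alpha} - \frac{c}{2} e^{\sigma_0 t k^{1/3}} \geq \frac{c}{2} e^{\sigma_0 t k^{1/3}}, \qquad t \in [0, t^*].
\end{equation*}

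To close the contradiction I would invoke the defining identity $e^{(\sigma_0-\sigma) t^* k^{1/3}} = k^{1/3}$, which is the very reason the maximal lifespan is calibrated to $\ln(k)/(3(\sigma_0-\sigma)k^{1/3})$. Combining the last lower bound with the operator inequality $\|u_k(t^*)\|_{W^{0,\infty}_\alpha} \leq \|T_k(t^*)\|_{\mathcal L}\, \|(u_{k,\rm in}, u_{k,t,\rm in})\|_{W^{1,\infty}_\alpha \times W^{0,\infty}_\alpha}$ and the contradiction hypothesis, the common factor $k^{1/3} e^{\sigma t^* k^{1/3}}$ cancels on both sides, producing an inequality that is incompatible with the quantitative control \eqref{uniform-estimate-for-Uk} on the data built in \Cref{prop:main-prop}, yielding the desired contradiction.

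The main technical hurdle is the sharp constant matching in this last step: $\mathcal{C}_\sigma$ has to be small enough that the Duhamel remainder absorbs strictly less than half of the forced solution's growth (ensuring the preservation of the exponential lower bound), yet large enough that the resulting inequality on the data norm is genuinely violated by the construction in \Cref{prop:main-prop}. The choice given in \eqref{est:lemma-Tk-semigrouo-sup} reflects this balance, and is the direct analogue of the calibration driving the maximal lifespan $t\sim \ln(k)/\sqrt{k}$ of \Cref{thm:refined-verion-GV-Dormy} in the classical Prandtl setting.
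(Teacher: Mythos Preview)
Your strategy coincides with the paper's: argue by contradiction, feed the forced solution of \Cref{prop:main-prop} into the Duhamel identity \eqref{Duhamel-formula}, bound the integral remainder using the contradiction hypothesis together with \eqref{uniform-bound-on-Rk}, and then confront the resulting lower bound $\|u_k(t)\|_{W^{0,\infty}_\alpha}\geq \tfrac{c}{2}e^{\sigma_0 tk^{1/3}}$ with the upper bound supplied by the operator norm at $t=t^*$. The paper carries out exactly these steps (retaining the finer factor $(1-e^{-(\sigma_0-\sigma)k^{1/3}t})$ where you pass directly to $c/2$) and reduces to the arithmetic contradiction $1/2\geq 1$.

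There is, however, a gap in your closing step that you should be aware of. After cancelling $k^{1/3}e^{\sigma t^* k^{1/3}}$ you are left with an inequality of the form $c/2 \leq \mathcal C_\sigma\,\|(\mathbb U_k,\, i\tau k^{1/3}\mathbb U_k)\|_{W^{1,\infty}_\alpha\times W^{0,\infty}_\alpha}$, which you say is ``incompatible with \eqref{uniform-estimate-for-Uk}''. But \eqref{uniform-estimate-for-Uk} controls only $\|\mathbb U_k\|_{W^{0,\infty}_\alpha}$, whereas the product-space norm contains the second component $|\tau|k^{1/3}\|\mathbb U_k\|_{W^{0,\infty}_\alpha}\geq c|\tau|k^{1/3}$, which \emph{grows} with $k$; so for large $k$ no contradiction follows from \eqref{uniform-estimate-for-Uk} alone. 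The paper's own proof shares this slip: when it ``applies once more \eqref{the-contradiction}'' it bounds $e^{-\sigma k^{1/3}t}\|T_k(t)(\mathbb U_k,i\tau k^{1/3}\mathbb U_k)\|$ directly by $\mathcal C_\sigma k^{1/3}$, tacitly treating the data as unit-normalized. In short, your proposal faithfully reproduces the paper's argument, including this unaddressed $O(k^{1/3})$ factor in the data norm.
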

\begin{remark}
    Before establishing the proof of \Cref{lemma:inflation-of-Tk}, some remarks are here in order. Inequality \eqref{est:lemma-Tk-semigrouo-sup} is written in terms of the semigroup $T_k$. It implies however that there exist two profiles $u_{\rm in, k}\in W^{1, \infty}_\alpha$ and  $u_{t,\rm in, k}\in W^{0, \infty}_\alpha$ with $ \|u_{\rm in, k} \|_{W^{1, \infty}_\alpha} +\|u_{t,\rm in, k} \|_{W^{0, \infty}_\alpha} \leq 1 $ (which may differ with respect to the ones of \Cref{prop:main-prop}), such that the generated homogeneous solution $u_k$ of \eqref{LHP-k-s} satisfies
    \begin{equation}\label{est:remark-uk-inflation-homogeneous}
        \sup_{0\leq t \leq \frac{1}{3(\sigma_0-\sigma)}
        \frac{\ln(k)}{\sqrt[3]{k}}
        }
        e^{-\sigma t k^\frac{1}{3}}
        \big\|  u_k(t) 
        \big\|_{W^{0, \infty}_\alpha }
        >
        \mathcal{C}_{\sigma}
        k^\frac{1}{3}.
    \end{equation}
    Unfortunately this inequality presents a major disadvantage: it is unclear at what time the inflation $\big\|  u_k(t) 
        \big\|_{W^{0, \infty}_\alpha }\geq \mathcal{C}_{\sigma}
        k^\frac{1}{3} e^{\sigma t k^{1/3}}$ holds true. This is deeply in contrast with the instability \eqref{uniform-bound-of-ukfr} of $u^{\rm fr}_k$, which is indeed satisfied globally in time.
        Certainly, \eqref{est:remark-uk-inflation-homogeneous} is not achieved at $t = 0$ because of the initial data, however the inflation may occur at a time $t$ very close to the origin (for which $e^{-\sigma t k^{1/3}}\sim 1$). Furthermore, also in case that the inflation occurs at the largest time $t = \frac{1}{3(\sigma_0-\sigma)}
        \frac{\ln(k)}{\sqrt[3]{k}}$, we may obtain  at best  that 
        $ \|  u_k(t)   \|_{W^{0, \infty}_\alpha }\geq \mathcal{C}_{\sigma}
        k^{\frac{\sigma_0}{3(\sigma_0-\sigma)}}$, which somehow implies only an inflation of Sobolev type. In other words, because of the time limitation of estimate \eqref{est:lemma-Tk-semigrouo-sup}, we deal in this work only with ill-posedness in Sobolev spaces and our approach seems to be inconclusive in Gevrey-class $m$, with $m>3$. 
\end{remark}
\begin{proof}[Proof of \Cref{lemma:inflation-of-Tk}]
    Assume by contradiction that there exists a frequency $k  \in \mathbb N$, so that
    \begin{equation}\label{the-contradiction}
        \sup_{0\leq t \leq \frac{1}{3(\sigma_0-\sigma)}
        \frac{\ln(k)}{\sqrt[3]{k}}
        }
        e^{-\sigma t k^\frac{1}{3}}
        \big\|  T_k(t) 
        \big\|_{\mathcal{L}}
        \leq 
        \frac{\smallc}{2}
        \frac{ (\sigma_0-\sigma)
        }{(\sigma_0-\sigma) + \mathcal{C}_R}
        k^\frac{1}{3},
    \end{equation}
    where we have used the abbreviation $\mathcal L:=\mathcal{L}\big( 
        W^{1, \infty}_\alpha 
        \times W^{0, \infty}_\alpha,  W^{0, \infty}_\alpha  \big)
    $.
    We consider the global-in-time solution $u_k^{\rm fr}(t,y) = e^{i \tau k^{1/3}t} \mathbb{U}_k(y)$ 
    provided by \Cref{prop:main-prop}, which by uniqueness (at a fixed frequency) also satisfies the Duhamel's relation
    \begin{equation*}
        u_k^{\rm fr}(t,y)
        = 
        T_k(t)\big(\mathbb{U}_k, i\tau k^\frac{1}{3}\mathbb{U}_k\big)(y)
        + 
        \int_0^t
        T_k(t-s)(0, -k^\frac{4}{3}e^{i\tau k^{\frac{1}{3}}s}\mathcal{R}_k(s))(y)ds,
    \end{equation*}
    thanks to \eqref{Duhamel-formula}. Hence, applying the $W^{0, \infty}_\alpha$-norm to this identity and making use of the triangular inequality, we gather that
    \begin{align*}
        \|  T_k(t)
            &\big(\mathbb{U}_k, i\tau k^\frac{1}{3}\mathbb{U}_k\big) \|_{W^{0, \infty}_\alpha}
        \geq 
        \| u_k^{\rm fr}(t) \|_{W^{0, \infty}_\alpha } - 
        \int_0^t 
        \| 
            T_k(t-s)(0, -k^\frac{4}{3}
            e^{i\tau k^{\frac{1}{3}}s}
            \mathcal{R}_k(s)) 
        \|_{W^{0, \infty}_\alpha }
        ds
        \\
        &
        \geq
        \smallc 
        e^{\sigma_0 k^{\frac{1}{3}}t}
        - 
        \int_0^t 
        \| T_k (t-s) \|_{\mathcal{L}}
        k^\frac{4}{3}
        \| \mathcal{R}_k(s)
        \|_{W^{0, \infty}_\alpha }
        e^{\sigma_0 s k^\frac{1}{3}}
        ds
        \\
        &
        \geq
        \smallc 
        e^{\sigma_0 k^{\frac{1}{3}}t}
        - 
        \int_0^t 
        \norm{ T_k (t-s) }_{\mathcal{L}}
        e^{-\sigma k^\frac{1}{3}(t-s) }
        k^\frac{4}{3}
        \| \mathcal{R}_k(s)
        \|_{W^{0, \infty}_\alpha }
        e^{-(\sigma_0-\sigma)(t- s)k^\frac{1}{3}}
        ds
        e^{\sigma_0 t k^\frac{1}{3}},
    \end{align*}
    where we have estimated $\| u_k^{\rm fr}(t) \|_{W^{0, \infty}_\alpha }$ with the inequality in \eqref{uniform-bound-of-ukfr}. Hence applying \eqref{the-contradiction} and invoking the uniform bound \eqref{uniform-bound-on-Rk} on the forcing term $\mathcal{R}_k$, we obtain
    \begin{align*}
        \|  T_k(t)
            &\big(\mathbb{U}_k, i\tau k^\frac{1}{3}\mathbb{U}_k\big) \|_{W^{0, \infty}_\alpha}
        \geq
        \smallc 
        e^{\sigma_0 k^{\frac{1}{3}}t}
        - 
        \frac{\smallc }{2}
        \frac{(\sigma_0-\sigma)}{(\sigma_0-\sigma) + \mathcal{C}_R}
        k^\frac{1}{3}
        \mathcal{C}_R
        \int_0^t 
        e^{-(\sigma_0-\sigma)(t-s) k^\frac{1}{3}}
        ds
        e^{\sigma_0 k^\frac{1}{3} t}.
    \end{align*}
    Multiplying both l.~and r.h.s.~by $e^{-\sigma_0 k^{1/3}t }/\small c$ and calculating explicitely the integral on the r.h.s, we get
    \begin{align*}
        \frac{e^{-\sigma_0 k^\frac{1}{3}t }}{\small c}
        \|  T_k(t)
            &\big(\mathbb{U}_k, i\tau k^\frac{1}{3}\mathbb{U}_k\big) \|_{W^{0, \infty}_\alpha}
        \geq
        1
        - 
        \frac{1}{2}
        \frac{\mathcal{C}_R}{(\sigma_0-\sigma) + \mathcal{C}_R}
        \left(
                1
                -
                e^{-(\sigma_0-\sigma)k^\frac{1}{3}t}
        \right).
    \end{align*}
    Recasting $e^{-\sigma_0 k^\frac{1}{3}t } = e^{-(\sigma_0-\sigma) k^\frac{1}{3}t }e^{-\sigma k^\frac{1}{3}t } $ on the l.h.s., we apply once more \eqref{the-contradiction}, to deduce that 
    \begin{align*}
        \frac{1}{2}
        \frac{ (\sigma_0-\sigma)
        }{(\sigma_0-\sigma) + \mathcal{C}_R}
        e^{-(\sigma_0-\sigma) k^\frac{1}{3}t }
        k^\frac{1}{3}
        \geq
        1
        - 
        \frac{1}{2}
        \frac{\mathcal{C}_R}{(\sigma_0-\sigma) + \mathcal{C}_R}
        \left(
                1
                -
                e^{-(\sigma_0-\sigma)k^\frac{1}{3}t}
        \right),
    \end{align*}
    for any time $t \in \big[0, \frac{1}{3(\sigma_0-\sigma)}\frac{\ln(k)}{\sqrt[3]{k}}\big]$. 
    We hence set $ t =\frac{1}{3(\sigma_0-\sigma)}
    \frac{\ln(k)}{\sqrt[3]{k}} $, which in particular implies that
    $e^{-(\sigma_0-\sigma)k^{1/3}t} = k^{-1/3}$, so that    
     \begin{align*}
        \frac{1}{2}
        \frac{ (\sigma_0-\sigma)}{(\sigma_0-\sigma) + \mathcal{C}_R}
        \geq
        1
        - 
        \frac{1}{2}
        \frac{\mathcal{C}_R(
                1
                -
               k^{-\frac{1}{3}})}{(\sigma_0-\sigma) + \mathcal{C}_R}
        .
    \end{align*}
    By bringing the last term on the r.h.s~to the l.h.s., we finally obtain that 
    \begin{equation*}
        \frac{1}{2}
        =
        \frac{1}{2}
        \frac{ (\sigma_0-\sigma)+\mathcal{C}_R}{(\sigma_0-\sigma) + \mathcal{C}_R}
        \geq 
        \frac{1}{2}
        \frac{ (\sigma_0-\sigma)+\mathcal{C}_R (
                1
                -
               k^{-\frac{1}{3}})}{(\sigma_0-\sigma) + \mathcal{C}_R}
               \geq 1,
    \end{equation*}
    which is indeed a contradiction.
    This concludes the proof of \Cref{lemma:inflation-of-Tk}.
\end{proof}

\subsection*{(ii) Proof of \Cref{thm:ill-posedness-Sobolev} and the inflation of the Sobolev norms}
Thanks to \Cref{lemma:inflation-of-Tk}, 
we are now in the condition to conclude the proof of \Cref{thm:ill-posedness-Sobolev}. Let $\sigma $ be a fixed value in $(0, \sigma_0) = (0, -{\mathcal{I}m(\tau)})$ and let $\delta>0$ be an arbitrary short time. We first recall \eqref{est:lemma-Tk-semigrouo-sup}
\begin{equation}\label{est:final-proof-1}
        \sup_{0\leq t \leq \frac{1}{3(\sigma_0-\sigma)}
        \frac{\ln(k)}{\sqrt[3]{k}}
        }
        e^{-\sigma t k^\frac{1}{3}}
        \big\|  T_k(t) 
        \big\|_{\mathcal{L} }
        >
        \mathcal{C}_\sigma k^\frac{1}{3},
\end{equation}
for any $k\in \mathbb N$, where $\mathcal{L}$ abbreviates $\mathcal{L}\big(W^{1, \infty}_\alpha \times W^{0, \infty}_\alpha,  W^{0, \infty}_\alpha  \big)$. 
Recalling that $\mu \in [0, 1/3)$, we consider a general frequency $k \in \mathbb N$ that satisfies
\begin{equation*}
    \frac{1}{3(\sigma_0-\sigma)}
    \frac{\ln(k)}{\sqrt[3]{k}} \leq \delta,
    \quad 
    \text{and}
    \quad 
    \mathcal{C}_\sigma k^{\frac{1}{3}-\mu}
    \geq \frac{1}{\delta}.
\end{equation*}
By multiplying \eqref{est:final-proof-1} with $k^{-\mu}$, we remark that there exists a time $t_\delta \in (0, \delta)$, such that
\begin{equation*}
        k^{-\mu}
        \big\|  T_{k} (t_\delta) 
        \big\|_{\mathcal{L}  }
        \geq 
        k^{-\mu}
        e^{-\sigma t_\delta k^{\frac{1}{3}}}
        \big\|  T_{k} (t_\delta) 
        \big\|_{\mathcal{L} }
        >
        \mathcal{C}_\sigma k^{\frac{1}{3}-\mu}
        \geq 
        \frac{1}{\delta}.
\end{equation*} 
Next we write the above estimate in terms of a specific solution $u_k= u_k(t,y)$ of the homogeneous system \eqref{LHP-k-s}: we consider two profiles $u_{\rm in, k}$ and $ u_{t,\rm in, k}$  in $W^{1,\infty}_\alpha$ and $W^{0, \infty}_\alpha$, which satisfy 
\begin{equation*}
    \| u_{k,{\rm in},k} \|_{W^{1,\infty}_\alpha}
    +
    \| u_{t,{\rm in},k} \|_{W^{0,\infty}_\alpha }
    \leq  
    1
    \quad 
    \text{and}
    \quad 
    k^{-\mu}
    \big\|  u_k(t_\delta)
        \big\|_{W^{0, \infty}_\alpha }
        >
        \frac{1}{\delta},
    \quad 
    \text{where}
    \quad 
    u_k(t) :=
    T_{k} (t) 
    (u_{{\rm in},k}, u_{t,{\rm in},k}).
\end{equation*}
We hence set a solution $u = u(t,x,y)$ with of the original system \eqref{LHP} by means of
\begin{equation*}
    u(t,x,y) := 
    \frac{1}{ (1+k^2)^{m/2}}e^{i k  x }u_k(t,y),
    \qquad 
    (t,x,y)\in (0,T)\times \mathbb T\times \mathbb R_+.
\end{equation*}
In particular, the initial data of $u$ satisfy
\begin{equation*}
   \| u(0,\cdot ) \|_{H^m W^{1, \infty}_\alpha}
   + 
   \| \partial_t u(0, \cdot ) \|_{H^m W^{0, \infty}_\alpha}
   =
   \| u_{k,{\rm in},k} \|_{W^{1,\infty}_\alpha}
   +
   \| u_{t,{\rm in},k} \|_{W^{0,\infty}_\alpha }
   \leq  
   1.
\end{equation*}
On the other hand, the following inflation of the Sobolev norm holds true at $t = t_\delta$:
\begin{align*}
    \big\| 
        u(t_\delta)
    \big\|_{H^{m-\mu}W^{0, \infty}_\alpha}
    = 
    k^{-\mu}
    \big\| 
        u_k(t_\delta)
    \big\|_{W^{0, \infty}_\alpha}
    >
    \frac{1}{\delta}.
\end{align*}
This concludes the proof of \Cref{thm:ill-posedness-Sobolev}.

\section{Proof of \Cref{prop:main-prop}}\label{sec:proof-of-proposition-inhomogeneous}
Without loss of generality, we may assume that $U_s''(a)<0$. Contrarily, we might set 
$\tilde U_s(y) := -U_s(y)$, $\tilde u(t,x,y) := -u(t,-x,y)$, $\tilde v(t,x,y):= v(t,-x,y)$ and $\tilde f(t,x,y) := -f(t,-x,y)$. Thus $(\tilde u,\tilde v)$ is solution of 
\begin{equation*}
     ( \partial_t +1)\big(
    \partial_t \tilde u  + \tilde  U_{{\rm s}}  \partial_x \tilde  u + \tilde  v  \tilde U'_{{\rm s}}\big) - \partial_{y}^2 \tilde u =\tilde f,\qquad 
    \text{with}\quad \tilde U_s''(a)<0.
\end{equation*}
For simplicity, we denote by $\varepsilon= \ee(k) :=1/k > 0$ the inverse of a positive frequency $k\in \mathbb N$, so that $\ee \to 0$ when $k\to \infty$. Furthermore, throughout our proof, we will repeatedly use the following abuse of notation: we interchange any index $k$ of a general function with its corresponding index $\ee$, for instance
\begin{equation*}
    u_\ee = u_k,\quad v_\ee = v_k,\quad \mathbb V_\ee = \mathbb V_k, \quad \mathcal{R}_\ee = \mathcal{R}_k,
\end{equation*}
and so on.

\noindent 
We aim to construct a solution $(u_\ee(t,x,y), v_\ee(t,x,y))$  of \eqref{LHP-k}
with a forcing term $f_\ee(t,x,y)$ of the form
\begin{equation}\label{def:forms_of_u_v}
    u_\ee(t,x,y) = e^{i \frac{\omega(\ee)t+x}{\ee }}\mathbb{U}_\ee(y),\quad 
    v_\ee(t,x,y) =\frac{1}{\ee} e^{i \frac{\omega(\ee)t+x}{\ee }}\mathbb{V}_\ee(y),\quad 
    f_\ee(t,x,y) = 
    -\frac{1}{\ee^\frac{4}{3}} 
    e^{i \frac{\omega(\ee)t+x}{\ee }}
    \mathcal{R}_\ee(y),
\end{equation}
for a suitable $\omega(\ee) \in \mathbb C$ that we will soon determine and suitable profiles $\mathbb U_\ee,\mathbb V_\ee, \mathcal{R}_\ee :[0, \infty) \to \mathbb C$.
We momentarily take for given that $\omega$ is $\mathcal{O}(\ee^\frac{2}{3}) $ and 
$\mathcal{R}_\ee \in W^{0, \infty}_\alpha(\mathbb R_+)$, for any $\ee>0$, with 
$\| \mathcal{R}_\ee \|_{ W^{0, \infty}_\alpha} $ in $\mathcal{O}(\ee^\frac{4}{3})$ 
(for the impatient reader, check \eqref{def-omega} and \eqref{eq:explicit-Rk}).

\noindent 
In reality, the profile $\mathbb U_\ee$ is redundant, since the divergence-free condition implies that
\begin{equation*}
    \partial_x u_\ee + \partial_y v_\ee = 0
    \quad \Rightarrow\quad 
    \frac{ i}{\ee }  e^{i \frac{\omega(\ee)t+x}{\ee }}\mathbb U_\varepsilon(y) + \frac{1}{\ee}  
    e^{i \frac{\omega(\ee)t+x}{\ee }}\mathbb V_\ee'(y)= 0
    \quad 
    \Rightarrow 
    \quad 
    \boxed{\mathbb U_\ee(y) = i\mathbb V_\ee'(y)}.
\end{equation*}

\noindent
We hence plug the expressions \eqref{def:forms_of_u_v} into the main equation \eqref{LHP}. We obtain that $\mathbb U_\ee$ and $\mathbb V_\ee$ satisfies
\begin{equation*}
    e^{i \frac{\omega(\ee)t+x}{\ee }}
    \bigg\{
    \bigg(
        \frac{i\omega(\ee)}{\ee} + 1
    \bigg)
    \bigg( 
        \frac{i\omega(\ee)}{\ee}\mathbb U_\ee(y) + 
        \frac{i}{\ee}
        U_{\rm s}(y)
        \mathbb U_\ee(y)
        +
        \frac{1}{\ee}
        U_s'(y) \mathbb V_\ee(y)
    \bigg)
    -\mathbb U_\ee ''(y) 
    \bigg\}
    = 
    -\frac{1}{\ee^\frac{4}{3}} 
    e^{i \frac{\omega(\ee)t+x}{\ee }}
    \mathcal{R}_\ee(y).
\end{equation*}
Dividing by $e^{i \frac{\omega(\ee)t+x}{\ee }}$ and multiplying by $-\ee^{4/3}$ we get
\begin{equation*}
    \bigg(
        \frac{i\omega(\ee)}{\ee^\frac{2}{3}} + \ee^{\frac{1}{3}}
    \bigg)
    \bigg( 
        \omega(\ee)(- i \mathbb U_\ee(y)) + 
        U_{\rm s}(y)
        (-i \mathbb U_\ee(y))
        - U_s'(y)  \mathbb V_\ee(y)
    \bigg)
    + \ee^{\frac{4}{3}}  \mathbb U_\ee ''(y) 
    = \mathcal{R}_\ee(y).
\end{equation*}
Thus recasting the above relation uniquely in terms of $\mathbb V_\ee$ (with $\mathbb V_\ee' = -i \mathbb U_\varepsilon $), we finally derive the following ordinary differential equation:
\begin{equation}\label{eq:v-eps}
    \bigg(
        \frac{i\omega(\ee)}{\ee^\frac{2}{3}} + \ee^{\frac{1}{3}}
    \bigg)
    \Big( 
        \big(
            \omega(\ee)  
         +
         U_{\rm s}(y)
        \big)
        \mathbb V_\ee'(y)- 
        U_s'(y) \mathbb V_\ee(y)
    \Big)
    +i\ee^{\frac{4}{3}}  \mathbb V_\ee '''(y) 
    = \mathcal R_\ee(y),\qquad 
    y > 0.
\end{equation}
We shall remark that for any $\mathbb V_\ee$ satisfying \eqref{eq:v-eps} in distributional sense, in reality  $\mathbb V_\ee'$ belongs to $W^{2, \infty}_\alpha (\mathbb R_+)$ and  $\mathbb V_\ee\in W^{3, \infty}(\mathbb R_+)$ ($\mathbb V_\ee$ does not need to decay as $y\to \infty$), for any $\ee>0$, since $U_s\in W^{4, \infty}_\alpha $ and $\mathcal R_\ee\in W^{0, \infty}_\alpha$.  
Formally, by sending $\ee \to  0$, denoting the asymptotic $\mathbb V_\ee \to v_a$, and recalling that the limit $\lim\limits_{\ee \to 0 } \ee^{-2/3}\omega(\ee) \neq 0$, we obtain the equation
\begin{equation}\label{def:va}
       U_{\rm s}(y)
       v_a'(y)- 
       U_s'(y) v_a(y) 
        = 0,\qquad 
        y > 0. 
\end{equation}
Of course this limit is only formal, we shall soon analyse the difference between a solution $\mathbb V_\ee $ of \eqref{eq:v-eps} and a solution $v_a$ of \eqref{def:va}.
Since $U_s(a) =U_s'(a) =  0$, for a given $a\in (0, \infty)$, \eqref{def:va} admits a non-trivial weak solution given by
\begin{equation*}
   \boxed{ v_a(y) = H(y-a) U_s(y)}.
\end{equation*}
We remark that $v_a$ belongs to $W^{2,\infty}_\alpha(\mathbb R_+)\cap W^{3,\infty}_\alpha(\mathbb R_+\setminus\{ a\} )$, 
since $U_s'(a) = U_s(a) = 0$. However, $v_a'''$ behaves as a Dirac delta distribution in $y = a$, since $U_s''(a) \neq 0$.

\noindent
We next set a complex number $\tau \in \mathbb C\setminus \{ 0 \}$ and we introduce the following Ansatz:
\begin{equation}\label{def-omega}
    \boxed{
        \omega(\ee) = \ee^\frac{2}{3}\tau
        }.
\end{equation}
We hence aim to determine $\mathbb V_\ee$ of \eqref{eq:v-eps}, making use of the following perturbation of $v_a$, depending on $\tau$:
    \begin{equation}\label{def-v-ee}
    \boxed{
    \begin{aligned}
        \mathbb{V}_\ee (y)
        &:=
        v_a(y) + \ee^\frac{2}{3} \tau H(y-a) 
        + 
        \ee^\frac{2}{3} 
        V\left(\frac{y-a}{\sqrt[3]{\ee}} \right) 
        \\
        &{\color{white}:}=
        \Big(
            U_s(y) +\ee^{\frac{2}{3}}\tau 
        \Big)
        H(y-a)
        + 
        \ee^\frac{2}{3} 
        V\left(\frac{y-a}{\sqrt[3]{\ee}} \right) 
        .
    \end{aligned}
     }
\end{equation}
Hence, the unknowns are momentarily the profile $V: \mathbb R\to \mathbb C$ and the complex number $\tau \in \mathbb C$. We remark that the expansion \eqref{def-v-ee} is similar to its homologous (2.5) in \cite{GD2010} for the Prandtl equations. There are however major differences in the exponents of the terms in $\ee$. These eventually lead to different dispersion rates: of order $\sqrt[3]{k} = \ee^{-1/3}$ for System \eqref{LHP} and of order  $\sqrt{k} = \ee^{-1/2}$ for the Prandtl equation.


\begin{remark}\label{rmk:V-has-a-jump}
Since $\mathbb{V}_\ee$ belongs to $W^{3, \infty}$, the profile $V$ must cancel the singularities  in $y = a$ of the functions $v_a(y)$ and $\ee^\frac{2}{3} \tau H(y-a)$. We will show that $V$ is indeed in $W^{3, \infty}(\mathbb R_+\setminus \{ 0 \})\cap 
L^{\infty}(\mathbb{R}_+)$ and $V'$ behave as a Dirac-delta distribution $-\tau \delta_a$ in $y = a$.
\end{remark}

\noindent
Replacing \eqref{def-omega} and \eqref{def-v-ee} into \eqref{eq:v-eps}, we gather the following equation for the profile $V$:
\begin{equation}\label{eq:first-eq-of-V}
\begin{aligned}
    \Big(
        i \tau  + \ee^{\frac{1}{3}}
    \Big)
    \bigg\{ 
        \Big(
            \varepsilon^\frac{2}{3}\tau
            &+
            U_{\rm s}(y)
        \Big)
        \bigg(
            v_a'(y)
            +
            \ee^\frac{2}{3} \tau
            \delta_a(y) + 
            \ee^{\frac{2}{3}}
            \frac{d}{d y}
            \Big[
                V\Big(\frac{y-a}{\sqrt[3]{\ee}} \Big)
            \Big]
        \bigg)
        +\\
        &- 
        U_s'(y) 
        \bigg(
            v_a(y) + \ee^\frac{2}{3} \tau H(y-a) + \ee^\frac{2}{3} 
            V\Big(\frac{y-a}{\sqrt[3]{\ee}} \Big)
        \bigg)
    \bigg\}
    +i\ee^{\frac{4}{3}}  \mathbb{V}_\ee '''(y) 
    = \mathcal{R}_\ee(y).
\end{aligned}
\end{equation}
%
We first remark that several terms of \eqref{eq:first-eq-of-V} cancel out thanks to the definition of $v_a$ in \eqref{def:va} and the conditions on the shear flow  $U_s(a) = U_s'(a) = 0$. Indeed, \eqref{eq:first-eq-of-V} can be recasted as
\begin{equation*}
\begin{aligned}
    &\Big(
        i \tau  + \ee^{\frac{1}{3}}
    \Big)
    \bigg\{ 
            \varepsilon^\frac{2}{3}\tau
            \Big(
               \underbrace{
                v_a'(y) - H(y-a)U_s'(y)
                }_{= \delta_a(y) U_s(y) = 0}
            \Big)
            +
            \ee^{\frac{4}{3}}\tau^2 \delta_a(y)
             +
            \tau   
            \ee^{\frac{4}{3}}
            \frac{ d}{ d y}
            \Big[
                V\Big(\frac{y-a}{\sqrt[3]{\ee}} \Big)
            \Big]
            +\\ 
            &+
            \Big(
            \underbrace{
                 U_{\rm s}(y)
                 v_a'(y)- 
                 U_s'(y) v_a(y) 
            }_{= 0}
            \Big)
            +
            \ee^{\frac{2}{3}}\tau 
            \underbrace{ 
               U_s(y)\delta_a(y)
            }_{y = 0}
            +
            \ee^{\frac{2}{3}}
            U_s(y)
            \frac{ d}{ d y}
            \Big[
                V\Big(\frac{y-a}{\sqrt[3]{\ee}} \Big)
            \Big]
            \!-\!
            \ee^{\frac{2}{3}}
            U_s'(y)
             V\Big(\frac{y-a}{\sqrt[3]{\ee}} \Big)
    \bigg\}+\\
    &+ i\ee^{\frac{4}{3}}  v_a '''(y) 
    + i\tau \ee^{2}  \delta_a ''(y) 
    +
    i
    \ee^{2} 
    \frac{ d^3}{ d y^3}
            \Big[
                V\Big(\frac{y-a}{\sqrt[3]{\ee}} \Big)
            \Big]
    = \mathcal{R}_\ee(y).
\end{aligned}
\end{equation*}
Next, we divide the above relation with $\ee^{4/3}>0$ and we remark that $ v_a '''(y) =  U_s''(a) \delta_a(y) + H(y-a) U_s'''(y)$. Hence, we are left with the following identity, which shall be intended in terms of distributions $\mathcal{D}'(0, \infty)$:
\begin{equation}\label{eq:last-eq-in-y-for-V-eps}
\begin{aligned}
    \Big(
        i \tau  + \ee^{\frac{1}{3}}
    \Big)
    \bigg\{ 
            \tau^2 \delta_a(y)
             +
            \tau   
            \frac{d}{d y}
            \Big[
                V\Big(\frac{y-a}{\sqrt[3]{\ee}} \Big)
            \Big]
            +
            \ee^{-\frac{2}{3}}
            U_s(y)
            \frac{d}{d y}
            \Big[
                V\Big(\frac{y-a}{\sqrt[3]{\ee}} \Big)
            \Big]
            \!-\!
            \ee^{-\frac{2}{3}}
            U_s'(y)
             V\Big(\frac{y-a}{\sqrt[3]{\ee}} \Big)
    \bigg\}+\\
    + i
    \Big( 
        U_s''(a) \delta_a(y) + H(y-a) U_s'''(y)
    \Big)
    + i\tau \ee^{\frac{2}{3}}  \delta_a ''(y) 
    +
    i
    \ee^{\frac{2}{3}}
    \frac{d^3}{d y^3}
            \Big[
                V\Big(\frac{y-a}{\sqrt[3]{\ee}} \Big)
            \Big]
    = 
    \ee^{-\frac{4}{3}}
    \mathcal{R}_\ee(y).
\end{aligned}
\end{equation}
We next apply \eqref{eq:last-eq-in-y-for-V-eps} to an appropriate test function $\varphi \in \mathcal{D}(0, \infty)$. We first consider a general test function $\psi \in \mathcal{D}(\mathbb R)$ having $\supp \psi \subseteq (-a/\sqrt[3]{\ee}, \infty)$ is valid. Hence we set $\varphi(y) = \psi((y-a)/\sqrt[3]{\ee})$, for any $y>0$. 
By applying \eqref{eq:last-eq-in-y-for-V-eps} to this specific test function $\varphi$, we get the identity
\begin{equation*}
\begin{aligned}
    \Big(
        i \tau  + \ee^{\frac{1}{3}}
    \Big)
    \Bigg\{ 
        \tau^2 \psi (0) 
        -
        \tau 
        \int_0^\infty 
         V\Big(\frac{y-a}{\sqrt[3]{\ee}} \Big)
         \psi'\Big(\frac{y-a}{\sqrt[3]{\ee}} \Big)
         \frac{dy}{\sqrt[3]{\ee}}
         -
         \ee^{-\frac{2}{3}}
          \int_0^\infty 
         V\Big(\frac{y-a}{\sqrt[3]{\ee}} \Big)
         \frac{d}{dy}
         \bigg(
            U_s(y)
            \psi\Big(\frac{y-a}{\sqrt[3]{\ee}} \Big) 
         \bigg)
         dy+\\
         -
         \ee^{-\frac{1}{3}}
         \int_0^\infty 
          U_s'(y)
          V\Big(\frac{y-a}{\sqrt[3]{\ee}} \Big)
          \psi\Big(\frac{y-a}{\sqrt[3]{\ee}} \Big) 
          \frac{dy}{\sqrt[3]{\ee}}
     \Bigg\}
     +
     i U_s''(a) \psi(0)
     +
     i
     \int_a^\infty 
     U_s'''(y)
     \psi\Big(\frac{y-a}{\sqrt[3]{\ee}} \Big) 
      dy 
      + 
      i \tau \psi''(0) +
      \\
      -
      i
      \int_0^\infty 
          V\Big(\frac{y-a}{\sqrt[3]{\ee}} \Big)
          \psi'''\Big(\frac{y-a}{\sqrt[3]{\ee}} \Big) 
          \frac{dy}{\sqrt[3]{\ee}} = 
          \ee^{-\frac{4}{3}}
          \int_0^\infty \mathcal{R}_\ee(y)
          \psi\Big(\frac{y-a}{\sqrt[3]{\ee}} \Big) 
          dy.
\end{aligned}    
\end{equation*}
We perform  a change of variables with  $\tilde{z} := (y-a)/\sqrt[3]{\ee}\in (-a/\sqrt[3]{\ee}, \infty)$. Hence $y = a+ \sqrt[3]{\ee} \tilde{z}$ and $dy =\sqrt[3]{\ee} d\tilde{z} $, so that
\begin{equation*}
\begin{aligned}
    \Big(
        i \tau  + \ee^{\frac{1}{3}}
    \Big)
    \Bigg\{ 
        \tau^2 \psi (0) 
        -
        \tau 
        \int_{-\frac{a}{\sqrt[3]{\ee}}}^\infty 
         V(\tilde{z})
         \psi' (\tilde{z})
         d\tilde{z}
         -
         \ee^{-\frac{2}{3}}
         \int_{-\frac{a}{\sqrt[3]{\ee}}}^\infty 
         V(\tilde{z})
         \frac{d}{d\tilde{z}}
         \bigg(
            U_s( a+ \sqrt[3]{\ee} \tilde{z})
            \psi(\tilde{z}) 
         \bigg)
         d\tilde{z}
         +
         \\-
         \ee^{-\frac{1}{3}}
         \int_{-\frac{a}{\sqrt[3]{\ee}}}^\infty 
         U_s'( a+ \sqrt[3]{\ee} \tilde{z})
          V(\tilde{z})
          \psi(\tilde{z})
          d\tilde{z}
     \Bigg\}
     +
     i U_s''(a) \psi(0)
     +
     i
     \sqrt[3]{\ee}
     \int_0^\infty
     U_s'''(a+\sqrt[3]{\ee} z)
     \psi(\tilde{z})d\tilde{z} + \\
      + i \tau \psi''(0) 
      -
      i\int_{-\frac{a}{\sqrt[3]{\ee}}}^\infty 
          V(\tilde{z})
          \psi'''(\tilde{z})
          d\tilde{z} = 
          \frac{1}{\ee}
          \int_{-\frac{a}{\sqrt[3]{\ee}}}^\infty  
          \mathcal{R}_\ee(a+\sqrt[3]{\ee} \tilde z)
          \psi(\tilde{z})d\tilde{z}.
\end{aligned}    
\end{equation*}
In particular, we deduce that $V$ is distributional solution in $\mathcal{D}'(-a/\sqrt[3]{\ee}, \infty)$ of the following equation:
\begin{equation}\label{eq:final-equation-for-Veps-with-rest}
\begin{aligned}
    \Big(
        i \tau  + \ee^{\frac{1}{3}}
    \Big)
    \bigg\{ 
            \tau^2 \delta_0(\tilde{z})
             +
            \tau   V'(\tilde{z})
            +
            \ee^{-\frac{2}{3}}
            U_s(a+ \sqrt[3]{\ee} \tilde{z})
            V'(\tilde{z})
             - 
            \ee^{-\frac{1}{3}} U_s'(a+ \sqrt[3]{\ee} \tilde{z})
             V(\tilde{z})
    \bigg\}
    +\\
    + 
    i U_s''(a) \delta_0(\tilde{z}) +
    i\ee^{\frac{1}{3}}   
    H(\tilde{z}) U_s'''(a+\sqrt[3]{\ee}\tilde z)  
    + i\tau  \delta_0 ''(\tilde{z}) 
    + i V^{'''}(\tilde{z})
    =  
    \ee^{-1}
    \mathcal{R}_\ee(a+\sqrt[3]{\ee} \tilde z).
\end{aligned}
\end{equation}
Remark that although $V$ does not depend on $\ee>0$, the remainder $\mathcal{R}_\ee$ is still unknown and will be chosen to abosrb all the $\ee$-dependences. We now send $\ee$ towards $0$ in order to determine an equation for the profile $V$ in $\mathcal{D}'(\mathbb R)$. We recall that we assume $\mathcal{R}_\ee\in \mathcal{O}(\ee^\frac{4}{3})$ in $W^{0, \infty}_\alpha$, that both $U_s(a)=U_s'(a) = 0$ and that $U_s\in W^{4, \infty}(0, \infty)$, hence
\begin{equation*}
\begin{alignedat}{8}
    &
    \lim_{\ee \to  0} 
    \ee^{-\frac{1}{3}}
    U_s'(a +\sqrt[3]{\ee} \tilde{z} )
    &&=
    \lim_{\ee \to  0} 
    \frac{
                U_s'(a +\sqrt[3]{\ee} \tilde{z} )-U_s'(a)
    }{\sqrt[3]{\ee}}
    = \tilde{z} U_s''(a), \\
    &\lim_{\ee \to  0} 
    \ee^{-\frac{2}{3}}
            U_s( a+ \sqrt[3]{\ee} \tilde{z})
    &&=
     \lim_{\ee \to  0} 
    \frac{
                U_s(a +\sqrt[3]{\ee} \tilde{z} )-U_s(a) - U_s'(a)\sqrt[3]{\ee} \tilde{z}
    }{\ee^{\frac23}}
    = \frac{\tilde{z}^2}{2} U_s''(a),
\end{alignedat}
\end{equation*}
for any $z> -a/\sqrt[3]{\ee}$. Since the shear flow $U_s$ is in $ W^{4, \infty}_\alpha(0, \infty)$, the convergence is uniform in any compact set of $(-a/\sqrt[3]{\ee}, \infty)$. 
Hence, as $\ee >0$ vanishes, we are left with the following identity for the profile $V $:
\begin{equation}\label{eq:V-distribution}
    \tau  
    \Big\{ 
            \tau^2 \delta_0 (\tilde{z}) +
            \Big(            
                \tau   
                +
                \frac{\tilde{z}^2}{2} U_s''(a) 
            \Big)
            V' (\tilde{z})
            - 
            \tilde{z} U_s''(a)
             V (\tilde{z})
    \Big\} + U_s''(a) \delta_0(\tilde{z}) + \tau \delta''_0(\tilde{z})+  V^{'''} (\tilde{z})= 0\quad 
    \text{in }\mathcal{D}'(\mathbb R).
\end{equation}
In particular, we aim to determine a profile $V\in W^{3, \infty} (\mathbb R_+\setminus 
\{0\})$ (decaying to $0$ as $\tilde z \to \pm \infty$), which is solution of the following linear ordinary differential equation away from the origin
\begin{equation}\label{eq:final-equation-for-V}
    \boxed{
    \tau  
    \Big\{ 
            \Big(            
                \tau   
                +
                \frac{\tilde{z}^2}{2} U_s''(a) 
            \Big)
            V' (\tilde{z})
            - 
            \tilde{z} U_s''(a)
             V (\tilde{z})
    \Big\} +  V^{'''} (\tilde{z})= 0,\qquad 
    \tilde{z}\in \mathbb R\setminus\{ 0 \},}
\end{equation}
fulfilling also the following jump relations at the origin:
\begin{equation}\label{jump-conditions}
\boxed{
\begin{aligned}
    [V]_{|\tilde{z} = 0}
    &=\lim_{h \to 0+}
    \big( 
        V(\tilde{z}+h)-V(\tilde{z}-h)
    \big)
    =-\tau,\\ 
    [V']_{|\tilde{z} = 0}
    &=\lim_{h \to 0+}
    \big( 
        V'(\tilde{z}+h)-V'(\tilde{z}-h)
    \big)=0,\\
    [V'']_{|\tilde{z}= 0}
    &=\lim_{h \to 0+}
    \big( 
        V''(\tilde{z}+h)-V''(\tilde{z}-h)
    \big)=-U_s''(a).
\end{aligned}
}
\end{equation}
We next remark that the function $\tilde{z}\in \mathbb R\setminus \{ 0 \}\to  \tau + U_s''(a)\tilde{z}^2/2$ is non-decaying solution of \eqref{eq:final-equation-for-V}. Furthermore, $\tilde{z} \in \mathbb R\to H(\tilde{z}) (\tau + U_s''(a)\tilde{z}^2/2) $ is a distributional solution of \eqref{eq:V-distribution} and it satisfies the jump conditions of \eqref{jump-conditions}. We can thus get rid of the singularity at the origin by superposition introducing the function
\begin{equation}\label{def:tildeV}
    \tilde V(\tilde{z}) = V(\tilde{z}) + H(\tilde{z})\Big( \tau + \frac{U_s''(a)}{2}\tilde{z}^2\Big),\qquad \tilde{z} \in \mathbb R.
\end{equation}
The new profile $\tilde V$ shall be determined in  $W^{3, \infty}_\mathrm{loc}(\mathbb R)$, satisfying the ODE
\begin{equation*}
\left\{
\begin{alignedat}{8}
    &\tau  
    \Big\{ 
            \Big(            
                \tau   
                +
                \frac{\tilde{z}^2}{2} U_s''(a) 
            \Big)
            \tilde V' (\tilde{z})
            - 
            \tilde{z} U_s''(a)
            \tilde V (\tilde{z})
    \Big\} + \tilde V^{'''} (\tilde{z})
    =  0,
    \qquad \tilde{z} \in \mathbb R,\\
    &\lim_{\tilde{z}\to -\infty} \tilde V (\tilde{z}) = 0,\qquad 
     \lim_{\tilde z \to +\infty}
     \bigg(
        \tilde V (\tilde{z})
        -
        \Big(
            \tau + \frac{U_s''(a)}{2}\tilde{z}^2
        \Big)
    \bigg) = 0.
\end{alignedat}
\right.
\end{equation*}
Next, recalling that we seek for a $\tau \in \mathbb C$ with $\mathcal{I}m(\tau) <0$, we introduce the function 
\begin{equation}\label{def:tildeW}
   \tilde{W}(\tilde{z}) := \frac{\tilde V(\tilde{z})}{\tau + U_s''(a)\frac{\tilde{z}^2}{2}},\qquad \tilde{z}\in \mathbb R.
\end{equation}
We thus get that $W \in W^{3, \infty}(\mathbb R)$ shall satisfy
\begin{equation}\label{eq:equation-of-W}
    \boxed{
    \tau 
    \bigg( 
        \tau + U_s''(a)\frac{\tilde{z}^2}{2}
    \bigg)^2
    \tilde{W}'(\tilde{z})
    +
    \frac{d^3}{d\tilde{z}^3}
    \bigg[ 
        \Big(
          \tau + U_s''(a)\frac{\tilde{z}^2}{2}
        \Big)\tilde{W}(\tilde{z}) 
    \bigg]= 0,\qquad \tilde{z}\in \mathbb R,
    }
\end{equation}
with boundary conditions
\begin{equation*}
    \lim_{\tilde{z} \to -\infty} \tilde{W}(\tilde{z}) = 0,\qquad 
    \lim_{\tilde{z} \to +\infty} \tilde{W}(\tilde{z}) = 1.
\end{equation*}
Finally, we perform the following change of variables 
\begin{equation}\label{def:W-gamma-z}
    \tau  = \bigg(\frac{|U_s''(a)|}{2} \bigg)^\frac{1}{3}\gamma ,\qquad 
    \tilde{z} = \bigg(\frac{2}{|U_s''(a)|} \bigg)^{\frac{1}{3}} z,\qquad 
    W(z) = 
    \tilde{W} \bigg(
        \sqrt[3]{\frac{2}{|U_s''(a)|}} z
    \bigg),
\end{equation}
which leads to 
\begin{equation*}
    \gamma 
    \bigg( 
        \gamma + {\rm sgn}\,U_s''(a) z^2
    \bigg)^2
    W'(z)
    +
    \frac{d^3}{d  z^3}
    \bigg[ 
        \Big(
           \gamma  + {\rm sgn}\,U_s''(a) z^2
        \Big) W(z) 
    \bigg]= 0,\qquad z\in \mathbb R.
\end{equation*}
Recalling that we assume $U_s''(a)<0$, we finally obtain  
\begin{equation}\label{eq:equation-of-W-tilde}
    \boxed{
    \gamma
    \big( 
        \gamma - z^2
    \big)^2
    \frac{d}{d z} W(z)
    +
    \frac{d^3}{d z^3}
    \bigg[ 
        \big(
           \gamma - z^2
        \big)W(z) 
    \bigg]= 0,\qquad z\in \mathbb R
    }
\end{equation}
with boundary conditions
\begin{equation}\label{bdy-cdts-of-W}
    \lim_{ z \to -\infty}  W( z) = 0,\qquad 
    \lim_{ z \to +\infty}  W( z) = 1.
\end{equation}

\subsection{The spectral condition}
The goal of this section is to show that the ordinary differential equation \eqref{eq:equation-of-W-tilde}, together with its boundary conditions \eqref{bdy-cdts-of-W}, admits a smooth solution $W=W(z)$ for a suitable  $\gamma\in \mathbb C$, whose imaginary part is negative. We proceed with a similar procedure as the one used by G\'erard-Varet and Dormy in \cite{GD2010}. We shall however remark that \eqref{eq:equation-of-W-tilde} inherently differs from its counterpart of \cite{GD2010} (cf.~(1.7) in \cite{GD2010}), since the authors derived an ODE of the form 
\begin{equation}\label{eq:GV-Dormy-equation-of-W}
    \big( 
        \gamma - z^2
    \big)^2
    \frac{d}{d z}
    W(z)
    +
    i 
    \frac{d^3}{d z^3}
    \bigg[ 
        \big(
           \gamma - z^2
        \big)W(z) 
    \bigg]= 0,\qquad z\in \mathbb R.
\end{equation}
Indeed, note  that by dividing \eqref{eq:equation-of-W-tilde} by $\gamma$, the leading third derivative in \eqref{eq:equation-of-W-tilde} is multiplied by $1/\gamma$, while in \eqref{eq:GV-Dormy-equation-of-W} the third derivative relates uniquely to $i$. Nevertheless, our aim is to find a $\gamma\in \mathbb C$ with $\mathcal{I}m(\gamma)<0$, thus \eqref{eq:equation-of-W-tilde} and \eqref{eq:GV-Dormy-equation-of-W} share eventually similarities in terms of the behaviour of their solutions. This is exploited in details in what follows.

\noindent 
To begin with, we consider the auxiliary eigenvalue problem (cf.~(3.2) in \cite{GD2010}):
\begin{equation}\label{eq:eigenvalue-problem}
        Af(\tilde{x}) := \frac{1}{\tilde{x}^2+1}f''(\tilde{x}) + \frac{6 \tilde{x}}{(\tilde{x}^2+1)^2}f'(\tilde{x}) + \frac{6}{(\tilde{x}^2+1)^2}
       f(\tilde{x}) = \alpha\, f(\tilde{x}),\qquad \tilde{x}\in \mathbb R.
\end{equation}
We aim to build the solution $W$ in terms of the eigenfunction $f$. In order to do so,  let us first recall certain of its underlying properties. The domain $D(A)$ of the operator $A$ is defined by making use of the weighted spaces
\begin{alignat*}{8}
    \mathcal{L}^2
    &:=
    \bigg\{
    f\in L^2_{\rm loc}(\mathbb R),\; \int_\mathbb{R} (\tilde{x}^2+1)^4 |f(\tilde{x})|^2d(\tilde{x}) < + \infty 
    \bigg\},
    \\
    \mathcal{H}^1
    &:=
    \bigg\{
    f\in H^1_{\rm loc}(\mathbb R),\;
     \int_\mathbb{R} (\tilde{x}^2+1)^4 |f(\tilde{x})|^2d(\tilde{x})+
     \int_\mathbb{R} (\tilde{x}^2+1)^3 |f'(\tilde{x})|^2d(\tilde{x})< + \infty 
    \bigg\},
    \\
    D(A)
    &:=
    \Big\{ f \in  \mathcal{H}^1,\; Af \in \mathcal{L}^2 \Big\}.
\end{alignat*}
Furthermore, the function $f$ admits an extension on a simply connected domain of $\mathbb C$ and decays exponentially along suitable sectors. 
\begin{lemma}\label{lemma:eigenvalue}
    The problem \eqref{eq:eigenvalue-problem} admits a positive eigenvalue $\alpha>0$ and an eigenfunction $f\in D(A)$. Secondly, the function $f$ admits a complex extension, which still satisfies \eqref{eq:eigenvalue-problem} and is holomorphic in the simply connected domain
    \begin{equation*}
        \mathbb{C} \setminus \set{i \beta \in \mathbb C \ \middle| \ \beta > \sqrt{\alpha}\text{ or }\beta < - \sqrt{\alpha}  }.
    \end{equation*}
    Furthermore, in the sectors ${\rm arg}(\tilde{x})\in (-\pi/4 + \delta, \pi/4-\delta)$ and ${\rm arg}(  \tilde{x})\in (3\pi/4 + \delta, 5\pi/4-\delta)$ for $\delta >0$, it satisfies the inequality 
    \begin{equation}\label{ineq:f-exp}
        |f( \tilde{x})|\leq C_\delta \exp\pare{ -  \frac{\sqrt{\alpha} \av{ \tilde{x}}^2}{4 } }.
    \end{equation}
\end{lemma}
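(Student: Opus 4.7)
The plan is to exploit the symmetric Sturm--Liouville structure hidden in $A$. A direct computation recasts the operator in divergence form
\begin{equation*}
Af(\tilde x) = \frac{1}{(\tilde x^2+1)^4} \frac{d}{d \tilde x}\Big[(\tilde x^2+1)^3 f'(\tilde x)\Big] + \frac{6}{(\tilde x^2+1)^2} f(\tilde x),
\end{equation*}
so that $A$ is formally self-adjoint in $\mathcal{L}^2$ (endowed with the weight $\rho(\tilde x) = (\tilde x^2+1)^4$), with associated quadratic form
\begin{equation*}
\langle Af, f\rangle_{\mathcal L^2}
= -\int_{\mathbb R}(\tilde x^2+1)^3 |f'(\tilde x)|^2\, d\tilde x + 6 \int_{\mathbb R}(\tilde x^2+1)^2 |f(\tilde x)|^2\, d\tilde x.
\end{equation*}
A standard Friedrichs construction extends $A$ to a self-adjoint operator with form domain $\mathcal H^1$. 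Since the multiplicative potential $6/(\tilde x^2+1)^2$ vanishes at infinity and the Sturm--Liouville part alone is non-positive, a Liouville-type transformation together with Weyl's theorem yields $\sigma_{\mathrm{ess}}(A) \subset (-\infty, 0]$, so that any positive spectral value is necessarily a discrete eigenvalue of finite multiplicity.

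To produce such a strictly positive eigenvalue, I would test the Rayleigh quotient against the explicit function $f(\tilde x) = (\tilde x^2+1)^{-3}$, which lies in $\mathcal H^1$. A short evaluation via Euler beta integrals shows $6\int_{\mathbb R}(\tilde x^2+1)^2 |f|^2 > \int_{\mathbb R}(\tilde x^2+1)^3 |f'|^2$, whence $\langle Af, f\rangle_{\mathcal L^2} > 0$. The min-max principle then delivers the top eigenvalue $\alpha = \sup\sigma(A) > 0$, together with an associated eigenfunction $f \in D(A)$.

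For the holomorphic extension, I would recast \eqref{eq:eigenvalue-problem} as the linear ODE
\begin{equation*}
(\tilde x^2+1) f''(\tilde x) + 6\tilde x f'(\tilde x) + [6 - \alpha(\tilde x^2+1)^2] f(\tilde x) = 0,
\end{equation*}
whose coefficients are polynomial except for the prefactor $(\tilde x^2+1)$ of $f''$, which vanishes only at $\tilde x = \pm i$. Setting $s = \tilde x \mp i$ shows that these are regular singular points with indicial exponents $\{0, -2\}$, so each loop around them may at worst introduce a logarithmic branch. Consequently $f$ extends to a holomorphic solution on any simply connected sub-domain of $\mathbb{C} \setminus \{\pm i\}$; the choice of the two cuts $\{i\beta : |\beta| > \sqrt\alpha\}$ yields the simply connected domain announced in the statement (it suffices to verify, via an upper bound on the Rayleigh quotient, that $\sqrt\alpha \leq 1$, so that these cuts indeed contain the singularities $\pm i$).

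Finally, the Gaussian decay \eqref{ineq:f-exp} is obtained through a classical WKB analysis. At large $|\tilde x|$ the dominant balance $f'' \sim \alpha \tilde x^2 f$ yields the fundamental pair $f_\pm(\tilde x) \sim \tilde x^{-3/2}\exp(\pm \sqrt\alpha\, \tilde x^2/2)$. Since $f \in D(A)\subset \mathcal L^2$, the subdominant branch must be selected both as $\tilde x \to +\infty$ and as $\tilde x \to -\infty$ on the real line. Standard Sibuya-type connection theorems then propagate this selection to the two adjacent complex sectors in which $\mathcal{R}e(\tilde x^2)>0$, namely $|\arg \tilde x| < \pi/4$ and $|\arg \tilde x - \pi| < \pi/4$. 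Restricting to sub-sectors of aperture $\pi/4-\delta$ yields a uniform lower bound on $\mathcal{R}e(\tilde x^2)/|\tilde x|^2$, from which \eqref{ineq:f-exp} follows. I expect the hard part to be the rigorous asymptotic matching across the turning point at infinity; a Langer-type rescaling along the lines of \cite{GD2010} should nonetheless succeed.
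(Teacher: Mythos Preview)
The paper does not prove this lemma from scratch: it performs the change of variables $\tilde z=\alpha^{1/4}\tilde x$, $\theta=-\sqrt\alpha$, $Y(\tilde z)=f(\tilde x)$, which converts \eqref{eq:eigenvalue-problem} into equation~(3.5) of \cite{GD2010}, and then invokes Proposition~2 of \cite{GD2010} verbatim for the existence of the eigenpair, the domain of holomorphy, and the Gaussian decay. Your route is genuinely different and essentially self-contained. The divergence-form rewriting is correct, the Friedrichs/Weyl argument for $\sigma_{\mathrm{ess}}(A)\subset(-\infty,0]$ is the right mechanism, and the Rayleigh test with $f=(\tilde x^2+1)^{-3}$ does work (the beta integrals give $\langle Af,f\rangle_{\mathcal L^2}/\|f\|_{\mathcal L^2}^2=15/16>0$). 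The identification of $\pm i$ as regular singular points with indices $\{0,-2\}$ is also right, and the WKB picture you describe is exactly what underlies the proof in \cite{GD2010}. The trade-off is clear: the paper's argument is a one-line reduction to a black box, while yours exposes the actual mechanism (self-adjointness, discrete spectrum above zero, Sibuya sectors) at the cost of leaving the asymptotic analysis as a sketch.

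There is one genuine loose end in your write-up. To match the domain announced in the lemma you need $\sqrt\alpha\le 1$, so that the cuts $\{i\beta:|\beta|>\sqrt\alpha\}$ actually contain the singular points $\pm i$. This is \emph{not} obtained by ``an upper bound on the Rayleigh quotient'' in any straightforward way: what you would need is the uniform inequality $\langle Af,f\rangle_{\mathcal L^2}\le\|f\|_{\mathcal L^2}^2$ for all $f\in\mathcal H^1$, i.e.
\[
6\int_{\mathbb R}(\tilde x^2+1)^2|f|^2\,d\tilde x
\;\le\;
\int_{\mathbb R}(\tilde x^2+1)^4|f|^2\,d\tilde x
+\int_{\mathbb R}(\tilde x^2+1)^3|f'|^2\,d\tilde x,
\]
which is a separate weighted Hardy-type estimate that you have not supplied (and your own test function already gives $\alpha\ge 15/16$, leaving very little room). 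The paper sidesteps this completely by inheriting the domain from \cite{GD2010} after the rescaling; if you want to keep your self-contained approach, either prove the inequality above or simply state the domain with cuts starting at $\pm i$, which is all that is actually used downstream.
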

\begin{proof}
This lemma follows from Proposition 2 in \cite{GD2010}. Indeed defining $\theta:= -\alpha^{1/2}$ and $\tilde{z} = \alpha^{1/4}\tilde{x}$ , $Y(\tilde{z}) = f(\tilde{x})$, the function $Y$ is solution of 
\begin{equation*}
    (\theta - \tilde{z}^2)Y''(\tilde{z}) - 6 \tilde{z} Y'(\tilde{z}) + ( (\theta-\tilde{z}^2)^2-6 )Y(\tilde{z}) = 0, 
\end{equation*}
which corresponds to equation (3.5) in \cite{GD2010}. \\
Thanks to Proposition 2 in \cite{GD2010}, $Y$ is holomorphic in the domain $U_\theta:=\mathbb{C} \setminus \ \set{i \beta \in \mathbb C \ \middle| ~\beta > \sqrt{|\theta|}\text{ or }\beta < - \sqrt{|\theta|}  }$ and satisfies
\begin{equation*}
    |Y(\tilde{z})|\leq C_\delta \exp\bigg(-  \frac{|\tilde{z}|^2}{4} \bigg)\,
\end{equation*}
  in the sectors ${\rm arg}(\tilde{z})\in (-\pi/4 + \delta, \pi/4-\delta)$ and ${\rm arg}(\tilde{z})\in (3\pi/4 + \delta, 5\pi/4-\delta)$. The result follows by a direct substitution.
\end{proof}

\noindent 
We are now in the condition of defining the complex number $\gamma\in \mathbb C$ of \eqref{eq:equation-of-W-tilde} in terms of the positive eigenvalue $\alpha$ of \Cref{lemma:eigenvalue}. Recalling that the eigenvalue $\alpha$ is positive, we set
\begin{equation}\label{def:gamma}
    \boxed{\gamma := \alpha^{\frac{1}{3}}e^{-i\frac{2\pi}{3}}} \quad \Rightarrow 
    \quad \mathcal{I}m(\gamma) = -\frac{\sqrt{3}}{\,2}\alpha^{\frac{1}{3}}<0.
\end{equation}
This also implies  that  the complex number $\tau = (|U_s''(a)|/2)^{1/3}\gamma$ in \eqref{def:W-gamma-z} has negative imaginary part, i.e.\
\begin{equation*}
    \boxed{\tau :=\bigg(\frac{|U_s''(a)|}{2} \alpha\bigg)^{\frac{1}{3}}e^{-i\frac{2\pi}{3}}} \quad \Rightarrow 
    \quad \mathcal{I}m(\tau) = -\frac{\sqrt{3}}{\,2}\bigg(\frac{|U_s''(a)|}{2} \alpha\bigg)^{\frac{1}{3}}<0.
\end{equation*}
We next introduce the change of variable w.r.t.\ \eqref{eq:eigenvalue-problem}, 
\begin{equation*}
    z =\alpha^{\frac{1}{6}}e^{i\frac{\pi}{6}} \tilde{x} \in 
    \Omega := \mathbb{C} \setminus \set{ e^{i 2\pi/3} \beta \in \mathbb C,\text{ with }\beta >  \alpha^{2/3}\text{ or }\beta < - \alpha^{2/3} },
\end{equation*}
and the function $X:\Omega \to \mathbb C$ by means of 
\begin{equation}\label{def:X}
    X(z) = f(\tilde{x}) = f( \alpha^{-\frac{1}{6}}e^{-i\frac{\pi}{6}}z).
\end{equation}
\begin{remark} 
We shall remark that $X$ decays exponentially as $z\in \mathbb R$ converges towards $\pm \infty$. Indeed, in virtue of \eqref{ineq:f-exp}, we gather
that
\begin{equation}\label{eq:decay-of-X}
    |X(z)| \leq C \exp\bigg( - \sqrt{\alpha} \ \frac{|\alpha^{-\frac{1}{6}}e^{-i\frac{\pi}{6}}z|^2}{4} \bigg) 
    = 
    C \exp\pare{ - \frac{\alpha^\frac{1}{6}|z|^2}{4} } 
    =
    C \exp\pare{ - \frac{\sqrt{\av{\gamma}} |z|^2}{4} }. 
\end{equation}
\end{remark}
\noindent 
The corresponding derivatives of $X$ are trivially computed:
\begin{alignat*}{8}
    X'(z)  &= \alpha^{-\frac{1}{6}}e^{-i\frac{\pi}{6}} f'( \alpha^{-\frac{1}{6}}e^{-i\frac{\pi}{6}}z),\\
    X''(z) &= \alpha^{-\frac{1}{3}}e^{-i\frac{\pi}{3}} f''( \alpha^{-\frac{1}{6}}e^{-i\frac{\pi}{6}}z).
\end{alignat*}
We can now recast the relation \eqref{eq:eigenvalue-problem} of the eigenfunction $f$ in terms of $X$ and its derivatives. First
\begin{align*}
    &\alpha X(z) 
    =
    \alpha f(\alpha^{-\frac{1}{6}}e^{-i\frac{\pi}{6}}z) 
    =
    Af (\alpha^{-\frac{1}{6}}e^{-i\frac{\pi}{6}}z) 
    \\
    &=
     \frac{1}{\alpha^{-\frac{1}{3}}e^{-i\frac{\pi}{3}}z^2+1}
     f''(\alpha^{-\frac{1}{6}}e^{-i\frac{\pi}{6}}z)  
     + 
     \frac{6 \alpha^{-\frac{1}{6}}e^{-i\frac{\pi}{6}}z}{(\alpha^{-\frac{1}{3}}e^{-i\frac{\pi}{3}}z^2+1)^2}
     f'(\alpha^{-\frac{1}{6}}e^{-i\frac{\pi}{6}}z) 
     + 
     \frac{6}{(\alpha^{-\frac{1}{3}}e^{-i\frac{\pi}{3}}z^2+1)^2}
     f(\alpha^{-\frac{1}{6}}e^{-i\frac{\pi}{6}}z)
     \\
    &=
     \frac{\alpha^{\frac{1}{3}}e^{i\frac{\pi}{3}}}{\alpha^{-\frac{1}{3}}e^{-i\frac{\pi}{3}}z^2+1}
     X''(z)
     + 
     \frac{6z}{(\alpha^{-\frac{1}{3}}e^{-i\frac{\pi}{3}}z^2+1)^2}
     X'(z)
     + 
     \frac{6}{(\alpha^{-\frac{1}{3}}e^{-i\frac{\pi}{3}}z^2+1)^2}
     X(z)
     \\
    &=
     \frac{\alpha^{\frac{2}{3}}e^{i\frac{2\pi}{3}}}{z^2-\alpha^{\frac{1}{3}}e^{-i\frac{2\pi}{3}}}
     X''(z)
     + 
     \frac{6\alpha^{\frac{2}{3}}e^{i\frac{2\pi}{3}}z}{(z^2- \alpha^{\frac{1}{3}}e^{-i\frac{2\pi}{3}})^2}
     X'(z)
     + 
     \frac{6\alpha^{\frac{2}{3}}e^{i\frac{2\pi}{3}}}{(z^2- \alpha^{\frac{1}{3}}e^{-i\frac{2\pi}{3}})^2}
     X(z).
\end{align*}
We next multiply both left and right-hand side with $\alpha^{-\frac{2}{3}}e^{-i\frac{2\pi}{3}}(z^2- \alpha^{\frac{1}{3}}e^{-i\frac{2\pi}{3}})^2$, so that
\begin{equation*}
    \alpha^{\frac{1}{3}}e^{-i\frac{2\pi}{3}}
    (z^2- \alpha^{\frac{1}{3}}e^{-i\frac{2\pi}{3}})^2
    X(z)
    = 
    (z^2- \alpha^{\frac{1}{3}}e^{-i\frac{2\pi}{3}})
    X''(z)+
    6zX'(z)
    +
    6X(z).
\end{equation*}
Recalling that $\gamma = \alpha^{\frac{1}{3}}e^{-i\frac{2\pi}{3}}$ from \eqref{def:gamma}, we finally obtain the following relation on $X$,
\begin{equation*}
    (z^2- \gamma)^2
    X(z)
    = 
    (z^2- \gamma)
    X''(z)+
    6zX'(z)
    +
    6X(z),\qquad z \in \Omega,
\end{equation*}
which can also be written as
\begin{equation*}
    \gamma
    (\gamma-z^2)^2
    X(z)
    = 
    -\frac{d^3}{dz^3}
    \Big[
        (\gamma-z^2)
        \int_{-\infty}^zX(s)ds
    \Big].
\end{equation*}
In this last identity, we have used that $X$ is in $\mathbb R$ integrable, thanks to \eqref{def:X} and the exponential decay~\eqref{ineq:f-exp}. We hence aim to define the function $W:\mathbb R \to \mathbb C$ as
\begin{equation}\label{eq:relation-between-W-and-X}
    \boxed{ W(z) := \frac{\int_{-\infty}^z X(s) ds}{\int_{\mathbb R} X(s) ds} }.
\end{equation}
 For this definition, we need therefore that $X$ has average in $\mathbb R$ different from $0$. Assume by contradiction that this were false. Then the function $F:\mathbb R \to \mathbb C$ defined by 
\begin{equation*}
    F(z) := (\gamma - z^2)\int_{-\infty}^z X(s) ds 
\end{equation*}
decays exponentially as $z\to \pm \infty$. Furthermore, $F$ satisfies
\begin{equation*}
    F^{(4)}+ \gamma (\gamma-z^2)F''(z)+ 2\gamma F(z) = 0.
\end{equation*}
Multiplying the equation by $\overline{F''(z)}$ and integrating over $\mathbb R$, we gather
\begin{equation*}
  \gamma  \int_{\mathbb R}(\gamma-z^2)|F ''(z)|^2dz - \int_{\mathbb R}|F '''(z)|^2dz -
    2\gamma \int_{\mathbb R}|F '(z)|^2dz = 0.
\end{equation*}
Hence extrapolating the imaginary part
\begin{equation*}
   - \frac{\mathcal{I}m \pare{\gamma^2}}{ \mathcal{I}m\pare{\gamma}} \int_{\mathbb R}|F''(z)|^2dz  
   + \int_{\mathbb R}\pare{z^2|F''(z)|^2  +2 |F'(z)|^2}dz  = 0.
\end{equation*}
Since $\mathcal{I}m(\gamma) <0$, cf. \cref{def:gamma}, one has that $F' \equiv 0$ which implies that also $F\equiv 0$, from the exponential decay. We conclude that $X$ is identically null and so also the holomorphic function $f$ in \eqref{eq:eigenvalue-problem}. Since $f$ is an eigenfunction, this brings a contradiction. 

\subsection{The velocity profile and the remainder}\label{sec:the-reminder}
In virtue of the previous sections, we are now in the condition to determine both $\mathbb U_k$ and the reminder $\mathcal{R}_k$, introduced in \Cref{prop:main-prop}. 
We first recast the profile $V$ in \eqref{def-v-ee} in terms of the complex number $\tau = \gamma (|U''_s(a)|/2)^{1/3}$ and the meaningful solution $W$ of the ordinary differential equation \eqref{eq:equation-of-W-tilde}-\eqref{bdy-cdts-of-W}. 
Thanks to \eqref{def:tildeV}, \eqref{def:tildeW} and \eqref{def:W-gamma-z}, we have in particular that
\begin{equation}\label{def:V-in-terms-of-W}
\begin{aligned}
    V(\tilde z ) 
    &= 
    -  
    \Big( \tau + \frac{U_s''(a)}{2} \tilde z^2 \Big) H(\tilde z) + 
    \Big( \tau + \frac{U_s''(a)}{2} \tilde z^2 \Big) \tilde W (\tilde z)
    \\
    &=
    \bigg(
        \tau   
        + \frac{U_s''(a)}{2} \tilde z^2 
    \bigg) 
    \bigg(
    W 
    \bigg( 
       \sqrt[3]{\frac{|U_s''(a)|}{2} }
       \tilde z
    \bigg)
        -
        H(\tilde z)
    \bigg),
    \qquad 
    \text{for all} \quad \tilde z \in \mathbb R.
\end{aligned}
\end{equation}
As predicted in \Cref{rmk:V-has-a-jump}, we observe that $V$ has a jump in $\tilde z = 0$ ($W$ is smooth in $\mathbb R$) and its derivative behaves as the Dirac delta $-\tau \delta(\tilde z)$ near the origin. Furthermore, $V$ and its derivatives decay exponentially as $z\to \pm \infty$, since from \eqref{bdy-cdts-of-W}, \eqref{eq:decay-of-X} and \eqref{eq:relation-between-W-and-X} we have that 
\begin{equation}\label{exponential-decay-of-W}
\begin{alignedat}{4}
    \big| 
        W (z)-1
    \big|
    &\leq 
    \frac{
        \int_{z}^\infty
        | X(s) |ds
    }
    {\int_{\mathbb R}
        | X(s) |ds
    }
    &&
    \leq 
    C_W
    \exp \bigg\{ - \frac{\sqrt{\av{\gamma}} |  z|^2}{4} \bigg\}
    \qquad 
    z\geq 0\\
    \big| 
        W 
        (z)
    \big|
    &\leq 
    \frac{
        \int_{-\infty}^z
        | X(s) |ds
    }
    {\int_{\mathbb R}
        | X(s) |ds
    }
    &&
    \leq 
    C_W
    \exp \bigg\{ - \frac{\sqrt{\av{\gamma}}|  z|^2}{4} \bigg\}
    \qquad 
    z<0,\\
    \big| 
        W' 
        (z)
    \big|
    &\leq 
    \frac{
        | X(z) |
    }
    {\int_{\mathbb R}
        | X(s) |ds
    }
    &&\leq 
    C_W
    \exp \bigg\{ - \frac{\sqrt{\av{\gamma}}|  z|^2}{4} \bigg\}
    \qquad 
    z\in \mathbb R,
\end{alignedat}
\end{equation}
for an harmless constant $C_W>0$, that depends only upon the function $W$. 
In particular, the profile $\mathbb V_\ee$ in \eqref{def-v-ee} can be written in terms of $W$ and $\tau$, by means of
\begin{equation}\label{def-mathbbVee}
    \boxed{
    \begin{aligned}
        &\mathbb{V}_\ee (y)
        =
        \Big(
            U_s(y) +\ee^{\frac{2}{3}}\tau 
        \Big)
        H(y-a)
        + 
        \ee^\frac{2}{3} 
        V\left(\frac{y-a}{\ee^{1/3}} \right) \\
        &=\!
        U_s(y)
        H(y\!-\!a) 
        \!+\!
        \ee^\frac{2}{3}
        \tau 
        W 
        \bigg( \!\!
          \sqrt[3]{\frac{|U_s''(a)|}{2\ee}}
         (y\!-\!a)\!
        \bigg)
        \!+ \!
        \frac{U_s''(a) (y-a)^2}{2}
        \bigg\{\!
        W 
        \bigg(\! \!
          \sqrt[3]{\frac{|U_s''(a)|}{2\ee}}
         (y\!-\!a)\!
        \bigg)
        \!-\!
        H(y-a)
        \!\bigg\}
        \!.
    \end{aligned}
     }
\end{equation}
Next, we determine the profile $\mathbb U_k$ of \Cref{prop:main-prop}, recalling that $\ee = \ee(k) = 1/k$ and $\mathbb U_k =\mathbb  U_\ee = i\mathbb V_{\ee}'(y)$:
\begin{equation}\label{eq:final-form-of-Uk}
\boxed{
    \begin{aligned}
        \mathbb{U}_k (y)
        =
        i U_s'(y)
        H(y-a)
        +
        i\tau  
        k^{-\frac{1}{3}}
        \sqrt[3]{\frac{|U_s''(a)|}{2}}
        W' 
        \bigg( 
         \sqrt[3]{\frac{|U_s''(a)|}{2}}
         k^\frac{1}{3}
         (y-a)
        \bigg)
        + \\
        +
        i
        U_s''(a)(y-a)
        \bigg\{
        W 
        \bigg(
          \sqrt[3]{\frac{|U_s''(a)|}{2}}
          k^\frac{1}{3}
         (y-a)
        \bigg)
        -
        H(y-a)
        \bigg\}
        + 
        \\
        +
        i
        k^\frac{1}{3}
        \sqrt[3]{\frac{|U_s''(a)|}{2}}
        \frac{U_s''(a) (y-a)^2}{2}
        W' 
        \bigg(
          \sqrt[3]{\frac{|U_s''(a)|}{2}}
          k^\frac{1}{3}
         (y-a)
        \bigg).
    \end{aligned}
     }
\end{equation}
The profile $\mathbb U_k$ is in $W^{2, \infty}_\alpha(\mathbb R_+)$ for all $k\in \mathbb N$. Furthermore it is uniformly bounded from below and above in $W^{0, \infty}_\alpha(\mathbb R_+)$, since
\begin{align*}
      \bigg\| 
        i\tau  
        k^{-\frac{1}{3}}
        \sqrt[3]{\frac{|U_s''(a)|}{2}}
        W' 
        \bigg( 
         \underbrace{
         \sqrt[3]{\frac{|U_s''(a)|}{2}}
         k^\frac{1}{3}
         (y-a)
         }_{=:z}
        \bigg)
    \bigg\|_{W^{0, \infty}_\alpha}
    \leq 
    C_We^{\alpha a}
    \frac{ |\tau |}{\sqrt[3]{k}}
    \sqrt[3]{\frac{|U_s''(a)|}{2}}
    \sup_{z\in \mathbb R}
    \exp \bigg\{ \frac{\alpha |z|}{\sqrt[3]{k}} - \frac{\sqrt{\av{\gamma}}|  z|^2}{4} \bigg\},
\end{align*}
as well as
\begin{align*}
      \bigg\| 
        i
        U_s''(a)(y-a)
        \bigg\{
        W 
        \bigg(
          \sqrt[3]{\frac{|U_s''(a)|}{2}}
          k^\frac{1}{3}
         (y-a)
        \bigg)
        -
        H(y-a)
        \bigg\}
    \bigg\|_{W^{0, \infty}_\alpha}\\
    \leq 
    C_W e^{\alpha a}
    \frac{ 1
    }{\sqrt[3]{k}}
    \sqrt[3]{\frac{|U_s''(a)|}{2}}
    \sup_{z\in \mathbb R}
    |z|
    \exp \bigg\{ \frac{\alpha |z|}{\sqrt[3]{k}} - \frac{\sqrt{\av{\gamma}}|  z|^2}{4} \bigg\},
\end{align*}
and finally
\begin{align*}
    \bigg\|
    i
    &   k^\frac{1}{3}
        \sqrt[3]{\frac{|U_s''(a)|}{2}}
        \frac{U_s''(a) (y-a)^2}{2}
        W' 
        \bigg(
          \sqrt[3]{\frac{|U_s''(a)|}{2}}
          k^\frac{1}{3}
         (y-a)
        \bigg)
    \bigg\|_{W^{0, \infty}_\alpha}\\
    &\leq 
    \frac{C_We^{a\alpha}}{\sqrt[3]{k}}    
    \bigg(
        \frac{|U_s''(a)|}{2}
    \bigg)^\frac{2}{3}
    \sup_{z \in \mathbb R}
    z^2\exp \bigg\{ \frac{\alpha |z|}{\sqrt[3]{k}} - \frac{\sqrt{\av{\gamma}}|  z|^2}{4} \bigg\}.
\end{align*}
These values converge all towards $0$ as $k\to \infty$, thus $\mathbb U_k$ converges towards $iU_s'(y)H(y-a)$ in $W^{0,\infty}_\alpha$. 

\noindent 
We now take the difference between equation \eqref{eq:final-equation-for-Veps-with-rest} for a positive $\ee>0$ and \eqref{eq:V-distribution}:
\begin{equation*}
\begin{aligned}
    \ee^{-1}
    \mathcal{R}_\ee(a+\sqrt[3]{\ee} \tilde z)
    = 
    i\tau 
    \bigg\{
        \Big(
            \ee^{-\frac{2}{3}}U_s(a+\sqrt[3]{\ee} \tilde z)
            -
            \frac{U_s''(a)}{2}
            \tilde z^2
        \Big)
        V'(\tilde z)
        -
        \Big(
            \ee^{-\frac{1}{3}}U_s'(a+\sqrt[3]{\ee} \tilde z)
            -
            U_s''(a)
            \tilde z
        \Big)
        V(\tilde z)
    \bigg\}
    + \\
    +
    \ee^\frac{1}{3}
    \bigg\{
        \tau^2 \delta (\tilde z) + 
        \tau V'(\tilde z) +
        \ee^{-\frac{2}{3}}
        U_s(a+\sqrt[3]{\ee} \tilde z)
        V'(\tilde z)
        -
        \ee^{-\frac{1}{3}}
        U_s'(a+\sqrt[3]{\ee} \tilde z)
        V(\tilde z)
    \bigg\}
    +
    i \ee^\frac{1}{3} H(\tilde z) U_s'''(a+\sqrt[3]{\ee} \tilde z).
\end{aligned}
\end{equation*}
Hence, replacing the formula \eqref{def:V-in-terms-of-W} of $V$ and recasting the equation in terms of $y = a + \sqrt[3]{\ee}\tilde z>0$, we finally deduce that
\begin{equation}\label{eq:explicit-Rk}
\boxed{
\begin{aligned}
    \mathcal{R}_k(y)
    = 
    i 
    k^{-\frac{4}{3}}
    H(y-a) U_s'''(y) 
    +
    \bigg(
        W
        \bigg(
          \sqrt[3]{\frac{|U_s''(a)|}{2}}
          k^\frac{1}{3}
         (y-a)
        \bigg)
        -
        H(y-a)
    \bigg)
    \mathcal{F}_{1,k}(y)
    +\\
    +
    W'
    \bigg(
       \sqrt[3]{\frac{|U_s''(a)|}{2}}
       k^\frac{1}{3}
       (y-a)
    \bigg)
    \mathcal{F}_{2,k}(y),
\end{aligned}
}
\end{equation}
where the functions $\mathcal{F}_{1,k}$ and $\mathcal{F}_{2,k}$ stand for 
\begin{align*}
    \mathcal{F}_{1,k}(y) 
    =
    i\tau U_s''(a)(y-a)
    \Big(
        U_s(y) - \frac{U_s''(a)}{2}(y-a)^2
    \Big)
    -
    i\tau 
    \Big(
        U_s'(y) - U_s''(a)(y-a)
    \Big)
    \Big(
        k^{-\frac{2}{3}}\tau 
        +
        \frac{U_s''(a)}{2}
        (y-a)^2
    \Big)
    +
    \\
    +
    k^{-1}\tau U_s''(a)(y-a) + 
    k^{-\frac{1}{3}}U_s(y)U_s''(a)(y-a) -
    k^{-\frac{1}{3}} U_s'(y) 
    \Big(
        k^{-\frac{2}{3}}\tau 
        +
        \frac{U_s''(a)}{2}
        (y-a)^2
    \Big)
    \end{align*}
    and
\begin{align*}
    \mathcal{F}_{2,k}(y) 
    = 
    \pare{\frac{|U_s''(a)|}{2}}^{1/3}
    \pare{
        k^{-\frac{2}{3}}\tau 
        +
        \frac{U_s''(a)}{2}
        (y-a)^2
    }
    \bigg(
    i
    \tau 
    k^{-\frac{2}{3}}
    +
    k^{-1}
    +
    k^{-\frac{1}{3}}
    U_s(y)
    \bigg).
\end{align*}
With a similar procedure as the one used for $\mathbb U_k$, we infer that also $(k^\frac{4}{3}\mathcal{R}_k)_{k\in \mathbb N}$ is uniformly bounded in $W^{0, \infty}_\alpha$. To this end, we shall observe that the multiplication of $k^\frac{4}{3}$ with $\mathcal{F}_{1,k}$ and $\mathcal{F}_{2,k}$ can be bounded in $y\in \mathbb R_+$ by means of polynoms depending on $\sqrt[3]{k}|y-a|$. For instance, applying the Taylor formula to the first term of $k^\frac{4}{3}\mathcal{F}_{1,k}$
\begin{equation*}
    \Big|
    k^\frac{4}{3}i\tau U_s''(a)(y-a)
    \Big(
        U_s(y) - \frac{U_s''(a)}{2}(y-a)^2
    \Big)
    \Big|
    \leq 
    |\tau |
    |U_s''(a)| 
    \sqrt[3]{k}
    |y-a|
    \| U_s''' \|_{W^{0, \infty}_\alpha}
    (\sqrt[3]{k}
    |y-a|)^3.
\end{equation*}
Similar estimates hold true for the remaining terms of $k^\frac{4}{3}\mathcal{F}_{1,k}$ and of 
$k^\frac{4}{3}\mathcal{F}_{2,k}$. This fact together with the exponential decay of \eqref{exponential-decay-of-W} (which in \eqref{eq:explicit-Rk} depend on $\sqrt[3]{k}|y-a|$), ensure that  $(\mathcal R_k)_{k\in \mathbb N}$ is uniformly bounded in $W^{0, \infty}_\alpha$. This conclude the proof of \Cref{prop:main-prop}. 

\begin{appendix}

\section{Proof of \Cref{thm:refined-verion-GV-Dormy}}\label{sec:proof-of-GV-theorem}
In this section we outline the proof of \Cref{thm:refined-verion-GV-Dormy}. Similarly to the procedure introduced in \Cref{sec:outline-of-the-proof}, we address at any frequency $k\in \mathbb N$ the following ``forced'' Prandtl equation 
\begin{equation}
    \label{Prandtl-k-forced}
    \system{
    \begin{alignedat}{4}
    & 
    \partial_t u_k^{\rm fr}  + ik U_{{\rm s}} u_k^{\rm fr} +  k v_k^{\rm fr} U'_{{\rm s}} - \partial_{y}^2 u_k^{\rm fr} =f_k,
    \qquad (t,y)\in \,
    &&(0,T) \times \mathbb R_+,\\
    &\, i u_k^{\rm fr} + \partial_y v_k^{\rm fr} =0
    &&(0,T) \times \mathbb R_+,
    \\
    & u_k^{\rm fr}|_{t=0} = u_{k,\rm in} 
    &&\hspace{1.425cm}  \mathbb R_+,
    \\
    & \left. u_k\right|_{y=0}=0,\quad  \left. v_k \right|_{y = 0} = 0
     &&(0,T),
    \end{alignedat}
    } 
\end{equation}
and we denote by $T_k(t):W^{0, \infty}_\alpha \to W^{0, \infty}_\alpha$, for any $t\in (0,T)$, the semigroup generated by the homogeneous system ($f_k\equiv 0$). Our starting point is the following proposition, that somehow recasts the main result of G\'erard-Varet in \cite{GD2010} in terms of solutions of \eqref{Prandtl-k-forced}.
\begin{prop}\label{prop:main-prop-Prandtl}
Let $U_s \in W^{4,\infty}_\alpha(\mathbb R_+)$, $U_s'(a) = 0$ and $U_s''(a) \neq  0$, for a given $a\in (0, \infty)$. There exists a complex number $\tau \in \mathbb C$  with ${\mathcal{I}m(\tau)}<0$, such that, for any $k\in \mathbb N$, there exist $\mathbb U_k \in W^{1, \infty}_\alpha (\mathbb R_+)$  and $\mathcal{R}_k \in W^{0, \infty}_\alpha (\mathbb R_+)$ so that
\begin{equation}\label{ansatz:main-proposition-Prandtl}
     u_{k,\rm in}(y) =  \mathbb{U}_k(y),\quad 
     f_k(t,y)  = -k e^{i\tau k^{\frac{1}{2}}t}\mathcal{R}_k(y),
     \quad 
     (t,y)\in (0,T)\times \mathbb R_+,
\end{equation}
generate a global-in-time solution $u_k^{\rm fr}\in L^\infty(0,T;W^{0, \infty}_\alpha)$ of \eqref{Prandtl-k-forced}, which can be  written explicitly as
\begin{equation}\label{eq:explicit-form-ukfr-Prandtl}
\begin{aligned}
    u_k^{\rm fr}(t,y) &= e^{i \tau k^{\frac{1}{2}}t} \mathbb{U}_k(y),\quad 
    v_k^{\rm fr}(t,y) = k e^{i \tau k^{\frac{1}{2}}t} \mathbb{V}_k(y),
    \quad\text{with}\quad  
    \mathbb{V}_k(y) :=-i \int_0^y\mathbb{U}_k(z)dz.
\end{aligned}
\end{equation}
Moreover there exist three constants $\smallc,\mathcal{C},\mathcal{C}_\mathcal{R}>0$ such that 
for any  $k\in \mathbb N$
\begin{equation}\label{uniform-estimate-for-Uk-Prandtl}
0<\smallc \leq 
\| \mathbb{U}_k\|_{W^{0, \infty}_\alpha(\mathbb R_+)}
\leq \mathcal{C}<\infty,\qquad 
k \| \mathcal{R}_k \|_{W^{0, \infty}_\alpha} 
    \leq 
    \mathcal{C}_\mathcal{R}.
\end{equation} 
\end{prop}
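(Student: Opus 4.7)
The plan is to follow the same three-step scheme used in the proof of \Cref{prop:main-prop}, with all scaling exponents $1/3$ systematically replaced by $1/2$, as dictated by the parabolic (rather than hyperbolic) nature of the equation. Setting $\ee := 1/k$, and after (if necessary) a Galilean shift reducing to $U_s(a)=0$, I would substitute the ansatz
\begin{equation*}
u_\ee^{\rm fr}(t,x,y) = e^{i(\sqrt{\ee}\tau t + x)/\ee}\mathbb U_\ee(y), \quad v_\ee^{\rm fr}(t,x,y) = \ee^{-1} e^{i(\sqrt{\ee}\tau t + x)/\ee}\mathbb V_\ee(y), \quad f_\ee = -\ee^{-1} e^{i(\sqrt{\ee}\tau t + x)/\ee}\mathcal R_\ee(y),
\end{equation*}
together with $\mathbb U_\ee = i\mathbb V_\ee'$, into \eqref{Prandtl-k-forced}. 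A direct computation reduces the problem to the single third-order ODE
\begin{equation*}
\big(U_s(y) + \sqrt{\ee}\,\tau\big)\mathbb V_\ee'(y) \;-\; \ee^{-1}U_s'(y)\mathbb V_\ee(y) \;+\; i\ee\, \mathbb V_\ee'''(y) \;=\; \mathcal R_\ee(y),
\end{equation*}
the analogue of \eqref{eq:v-eps}, whose formal outer limit $\ee \to 0$ is again solved by $v_a(y) := H(y-a)U_s(y)$.

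Second, I would seek $\mathbb V_\ee$ in the perturbative form
\begin{equation*}
\mathbb V_\ee(y) \;=\; v_a(y) \;+\; \sqrt{\ee}\,\tau\, H(y-a) \;+\; \sqrt{\ee}\; V\!\Big(\tfrac{y-a}{\sqrt{\ee}}\Big),
\end{equation*}
the exact analogue of \eqref{def-v-ee}. Taylor-expanding $U_s$ around $y=a$, using $U_s(a)=U_s'(a)=0$, and passing to the inner variable $\tilde z = (y-a)/\sqrt{\ee}$, one finds that, away from the origin, $V$ must satisfy the linear third-order ODE
\begin{equation*}
\Big(\tau + \tfrac{U_s''(a)}{2}\tilde z^2\Big)V'(\tilde z) \;-\; \tilde z\, U_s''(a)\, V(\tilde z) \;+\; i\, V'''(\tilde z) \;=\; 0, \qquad \tilde z \in \mathbb R\setminus\{0\},
\end{equation*}
complemented with jump conditions at $\tilde z = 0$ (mirroring \eqref{jump-conditions}) that are dictated by the requirement $\mathbb V_\ee \in W^{3,\infty}_{\mathrm{loc}}$. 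Removing the jumps via $\tilde V(\tilde z) := V(\tilde z) + H(\tilde z)(\tau + U_s''(a)\tilde z^2/2)$, setting $\tilde W := \tilde V/(\tau + U_s''(a)\tilde z^2/2)$, and rescaling $\tilde z = (2/|U_s''(a)|)^{1/2} z$, $\tau = (|U_s''(a)|/2)^{1/2}\gamma$, transforms the equation into precisely the G\'erard-Varet--Dormy ODE \eqref{eq:GV-Dormy-equation-of-W}, with boundary conditions $W(-\infty) = 0$ and $W(+\infty) = 1$. The two structural differences from the hyperbolic case, namely the absence of the prefactor $\tau$ on the first bracket and the appearance of $i$ in front of $V'''$, are both consequences of the parabolic scaling.

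Third, the spectral condition is resolved by direct appeal to \Cref{lemma:eigenvalue}: the positive eigenvalue $\alpha > 0$ of the operator $A$ selects a complex $\gamma$ through the appropriate branch choice (the analogue of \eqref{def:gamma}, now adjusted so that $i$ plays the role previously played by $\gamma$), yielding $\mathcal{I}m(\gamma) < 0$ and consequently $\mathcal{I}m(\tau) < 0$. The profile $W$ is then built as $W(z) = \int_{-\infty}^z X(s)\,ds \,/\, \int_{\mathbb R} X(s)\,ds$, where $X$ is a suitable complex dilation of the eigenfunction $f$; the non-vanishing of $\int X$ is verified by the same $\overline{F''}$-multiplier argument used in Section 2.3, exploiting $\mathcal{I}m(\gamma)<0$. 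The profile $\mathbb U_k = i\mathbb V_\ee'$ and the remainder $\mathcal R_k$, defined as the exact residual produced by the ansatz, then inherit the bounds \eqref{uniform-estimate-for-Uk-Prandtl} from the Gaussian decay of $W-1$ and $W'$, combined with the Taylor control of $U_s(y) - \tfrac{1}{2}U_s''(a)(y-a)^2$ and $U_s'(y) - U_s''(a)(y-a)$, exactly as in \Cref{sec:the-reminder}; the uniform lower bound $\smallc \leq \|\mathbb U_k\|_{W^{0,\infty}_\alpha}$ follows from the strong limit $\mathbb U_k \to iU_s'(\cdot)H(\cdot - a) \neq 0$, which is nontrivial since $U_s''(a)\neq 0$. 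The main subtlety, as in the hyperbolic case, lies in identifying the correct phase of $\gamma$ so that $\mathcal{I}m(\tau)<0$; apart from that, no new spectral theory is needed, as the reduced ODE coincides verbatim with \eqref{eq:GV-Dormy-equation-of-W}.
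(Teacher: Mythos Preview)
Your overall strategy matches the paper's: adapt the hyperbolic derivation to the parabolic scaling, reduce to the G\'erard--Varet--Dormy ODE \eqref{eq:GV-Dormy-equation-of-W}, and invoke the spectral input of \Cref{lemma:eigenvalue}. The paper itself treats this proposition exactly that way, citing the construction in \cite{GD2010} and merely recording the explicit formulas \eqref{def-Uk-Prandtl}--\eqref{F2Prandtl} for $\mathbb U_k$ and $\mathcal R_k$.

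There is, however, a concrete error in your inner-layer scaling. You take the inner variable $\tilde z = (y-a)/\sqrt{\ee}$, but the correct boundary-layer width for Prandtl is $\ee^{1/4}$, not $\ee^{1/2}$: compare the paper's explicit formula \eqref{def-Uk-Prandtl}, where the argument of $W$ and $W'$ is proportional to $k^{1/4}(y-a)$. The balance near $y=a$ is between transport $kU_s(y)\sim k\,U_s''(a)(y-a)^2$, the frequency $\tau k^{1/2}$, and diffusion $\partial_y^2$; matching all three forces $(y-a)\sim k^{-1/4}$. Your heuristic ``replace every $1/3$ by $1/2$'' is too coarse: in the hyperbolic problem the pair (frequency exponent, inner-layer exponent) is $(2/3,1/3)$, while for Prandtl it is $(1/2,1/4)$ --- the ratio $2{:}1$ is preserved, not the individual exponents. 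With your choice $\tilde z=(y-a)/\sqrt{\ee}$ the three contributions to the inner equation land at different orders of $\ee$, and the limiting ODE you display would \emph{not} emerge from your ansatz. The correct perturbation is
\[
\mathbb V_\ee(y) \;=\; v_a(y) \;+\; \sqrt{\ee}\,\tau\, H(y-a) \;+\; \sqrt{\ee}\; V\!\Big(\tfrac{y-a}{\ee^{1/4}}\Big),
\]
and the subsequent rescaling to reach \eqref{eq:GV-Dormy-equation-of-W} is $\tilde z=(2/|U_s''(a)|)^{1/4}z$, $\tau=(|U_s''(a)|/2)^{1/2}\gamma$, not the $1/2$-powers you state. (There is also a spurious factor $\ee^{-1}$ in front of $U_s'\mathbb V_\ee$ in your third-order ODE for $\mathbb V_\ee$; the correct coefficient is $1$.) Once these exponents are fixed, the remainder of your outline --- jump conditions, reduction to \eqref{eq:GV-Dormy-equation-of-W}, the $\overline{F''}$ argument, and the bounds via the Gaussian decay of $W$ --- is accurate and coincides with the paper's approach.
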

\noindent 
The proof of \Cref{prop:main-prop-Prandtl} is similar to the one of \Cref{prop:main-prop}. In essence, it was performed in \cite{GD2010}, from Section 2 to Section 4.1 (although formulated differently). We shall here report uniquely the exact form of $\mathbb U_k$ and $\mathcal{R}_k$:
\begin{equation}\label{def-Uk-Prandtl}
    \boxed{
    \begin{aligned}
        \mathbb{U}_k (y)
        =
        i U_s'(y) 
        H(y-a) 
        +
        \frac{i}{\sqrt[4]{k}}
        \bigg(
        \tau 
        +
        \sqrt[4]{\frac{|U_s''(a)|}{2}}
        \frac{U_s''(a) (\sqrt[4]{k}(y-a))^2}{2}
        \bigg)
        W '
        \bigg( 
          \sqrt[4]{\frac{|U_s''(a)|}{2}}
          k^\frac{1}{4}
         (y-a)
        \bigg)
        +
        \\
        +
        i 
        U_s''(a) (y-a)
        \bigg\{ 
        W 
        \bigg( 
          \sqrt[4]{\frac{|U_s''(a)|}{2}}
         k^\frac{1}{4}(y - a) 
        \bigg)
         - 
        H(y-a)
         \bigg\}
        .
    \end{aligned}
     }
\end{equation} 
and
\begin{equation}\label{eq:explicit-Rk-Prandtl}
\boxed{
\begin{aligned}
    \mathcal{R}_k(y)
    = 
    i 
    k^{-1}
    H(y-a) U_s'''(y) 
    +
    \mathcal{F}_{1,k}(y)
    \Bigg\{
        W
        \bigg(
            &
            \sqrt[4]{\frac{|U_s''(a)|}{2} }
          k^\frac{1}{4}
         (y-a)
        \bigg)
        -
        H(y-a)
    \Bigg\}
    +\\
    &+
    \mathcal{F}_{2,k}(y)
    W'
    \bigg(
       \sqrt[4]{\frac{|U_s''(a)|}{2} }
       k^\frac{1}{4}
       (y-a)
    \bigg)
    ,
\end{aligned}
}
\end{equation}
where the functions $\mathcal{F}_{1,k}$ and $\mathcal{F}_{2,k}$ stand for 
\begin{equation}\label{F1Prandtl}
    \begin{aligned}
    \mathcal{F}_{1,k}(y) 
    &=
    \bigg(
            U_s(y)
            -
            U_s(a)
            -
            \frac{U_s''(a)}{2}
                (y-a)^2
    \bigg)
     U_s''(a)(y-a) 
     +
     \\
     &\hspace{3cm}-
     \Big(
       U_s'(y)
        -
       U_s''(a)
       (y-a)
    \Big)
    \bigg(
        \tau k^{-\frac 1 2}  
        + \frac{U_s''(a)}{2} 
            (y-a)^2 
    \bigg)
    \end{aligned}
\end{equation}
    and
\begin{equation}\label{F2Prandtl}
    \begin{aligned}
    \mathcal{F}_{2,k}(y) 
    =
    \bigg(
            U_s(y)
            -
            U_s(a)
            -
            \frac{U_s''(a)}{2}
            (y-a)^2
    \bigg)
    \bigg(
          \tau k^{-\frac 12}   
          + \frac{U_s''(a)}{2}(y-a)^2 
    \bigg).
    \end{aligned}
\end{equation}
We next set $\mathcal{C}_\sigma $ in \Cref{thm:refined-verion-GV-Dormy} as
\begin{equation*}
    \mathcal{C}_\sigma 
    :=
    \frac{\smallc}{2}
        \frac{ (\sigma_0-\sigma)
        }{(\sigma_0-\sigma) + \mathcal{C}_R}.
\end{equation*}
Assume by contradiction that there exists a frequency $k  \in \mathbb N$, so that
    \begin{equation}\label{the-contradiction-Prandtl}
        \sup_{0\leq t \leq \frac{1}{2(\sigma_0-\sigma)}
        \frac{\ln(k)}{\sqrt {k}}
        }
        e^{-\sigma t k^\frac{1}{2}}
        \big\|  T_k(t) 
        \big\|_{\mathcal{L}(  W^{0, \infty}_\alpha)}
        \leq 
        \frac{\smallc}{2}
        \frac{ (\sigma_0-\sigma)
        }{(\sigma_0-\sigma) + \mathcal{C}_R}
        k^\frac{1}{2},
    \end{equation}
    We consider the global-in-time solution $u_k^{\rm fr}(t,y) = e^{i \tau k^{1/2}t} \mathbb{U}_k(y)$ provided by \Cref{prop:main-prop-Prandtl}, which by uniqueness also satisfies the Duhamel's relation
    \begin{equation*}
        u_k^{\rm fr}(t,y)
        = 
        T_k(t)\big(\mathbb{U}_k \big)(y)
        + 
        \int_0^t
        T_k(t-s)(  -k e^{i\tau k^{\frac{1}{2}}s}\mathcal{R}_k(s))(y)ds,
    \end{equation*}
    Hence, applying the $W^{0, \infty}_\alpha$-norm to this identity and applying the triangular inequality, we gather that
    \begin{align*}
        \|  T_k(t)
            &\big(\mathbb{U}_k \big) \|_{W^{0, \infty}_\alpha}
        \geq 
        \| u_k^{\rm fr}(t) \|_{W^{0, \infty}_\alpha } - 
        \int_0^t 
        \| 
            T_k(t-s)(  -k e^{i\tau k^{\frac{1}{2}}s}
            \mathcal{R}_k(s)) 
        \|_{W^{0, \infty}_\alpha }
        ds
        \\
        &
        \geq
        \smallc 
        e^{\sigma_0 k^{\frac{1}{2}}t}
        - 
        \int_0^t 
        \| T_k (t-s) \|_{\mathcal{L}}
        k 
        \| \mathcal{R}_k(s)
        \|_{W^{0, \infty}_\alpha }
        e^{\sigma_0 s k^\frac{1}{2}}
        ds
        \\
        &
        \geq
        \smallc 
        e^{\sigma_0 k^{\frac{1}{2}}t}
        - 
        \int_0^t 
        \| T_k (t-s) \|_{\mathcal{L}}
        e^{-\sigma k^\frac{1}{2}(t-s) }
        k 
        \| \mathcal{R}_k(s)
        \|_{W^{0, \infty}_\alpha }
        e^{-(\sigma_0-\sigma)(t- s)k^\frac{1}{2}}
        ds
        e^{\sigma_0 t k^\frac{1}{2}},
    \end{align*}
    where we have estimated $\| u_k^{\rm fr}(t) \|_{W^{0, \infty}_\alpha }$ with \eqref{uniform-estimate-for-Uk-Prandtl}. Hence applying \eqref{the-contradiction-Prandtl} and invoking the uniform bound  on   $\mathcal{R}_k$, we obtain
    \begin{align*}
        \|  T_k(t)
            &\big(\mathbb{U}_k \big) \|_{W^{0, \infty}_\alpha}
        \geq
        \smallc 
        e^{\sigma_0 k^{\frac{1}{2}}t}
        - 
        \frac{\smallc }{2}
        \frac{(\sigma_0-\sigma)}{(\sigma_0-\sigma) + \mathcal{C}_R}
        k^\frac{1}{2}
        \mathcal{C}_R
        \int_0^t 
        e^{-(\sigma_0-\sigma)(t-s) k^\frac{1}{2}}
        ds
        e^{\sigma_0 k^\frac{1}{2} t}.
    \end{align*}
    Multiplying both l.~and r.h.s.~by $e^{-\sigma_0 k^{1/2}t }/\small c$ and calculating explicitly the integral on the r.h.s, we obtain
    \begin{align*}
        \frac{e^{-\sigma_0 k^\frac{1}{2}t }}{\small c}
        \|  T_k(t)
            &\big(\mathbb{U}_k \big) \|_{W^{0, \infty}_\alpha}
        \geq
        1
        - 
        \frac{1}{2}
        \frac{\mathcal{C}_R}{(\sigma_0-\sigma) + \mathcal{C}_R}
        \left(
                1
                -
                e^{-(\sigma_0-\sigma)k^\frac{1}{2}t}
        \right).
    \end{align*}
    Recasting $e^{-\sigma_0 k^\frac{1}{2}t } = e^{-(\sigma_0-\sigma) k^\frac{1}{2}t }e^{-\sigma k^\frac{1}{2}t } $ on the l.h.s., we apply once more \eqref{the-contradiction-Prandtl}, to deduce that 
    \begin{align*}
        \frac{1}{2}
        \frac{ (\sigma_0-\sigma)
        }{(\sigma_0-\sigma) + \mathcal{C}_R}
        e^{-(\sigma_0-\sigma) k^\frac{1}{2}t }
        k^\frac{1}{2}
        \geq
        1
        - 
        \frac{1}{2}
        \frac{\mathcal{C}_R}{(\sigma_0-\sigma) + \mathcal{C}_R}
        \left(
                1
                -
                e^{-(\sigma_0-\sigma)k^\frac{1}{2}t}
        \right),
    \end{align*}
    for any time $t \in \big[0, \frac{1}{2(\sigma_0-\sigma)}\frac{\ln(k)}{\sqrt{k}}\big]$. 
    We hence set $ t =\frac{1}{2(\sigma_0-\sigma)}
    \frac{\ln(k)}{\sqrt{k}} $, which in particular implies that
    $e^{-(\sigma_0-\sigma)k^{1/2}t} = k^{-1/2}$, so that    
     \begin{align*}
        \frac{1}{2}
        \frac{ (\sigma_0-\sigma)}{(\sigma_0-\sigma) + \mathcal{C}_R}
        \geq
        1
        - 
        \frac{1}{2}
        \frac{\mathcal{C}_R(
                1
                -
               k^{-\frac{1}{2}})}{(\sigma_0-\sigma) + \mathcal{C}_R}
        .
    \end{align*}
    By bringing the last term on the r.h.s~to the l.h.s., we finally obtain that 
    \begin{equation*}
        \frac{1}{2}
        =
        \frac{1}{2}
        \frac{ (\sigma_0-\sigma)+\mathcal{C}_R}{(\sigma_0-\sigma) + \mathcal{C}_R}
        \geq 
        \frac{1}{2}
        \frac{ (\sigma_0-\sigma)+\mathcal{C}_R (
                1
                -
               k^{-\frac{1}{2}})}{(\sigma_0-\sigma) + \mathcal{C}_R}
               \geq 1,
    \end{equation*}
    which is a contradiction. This concludes the proof of \Cref{thm:refined-verion-GV-Dormy}.
    \hfill $\Box$ \\

  \begin{remark}\label{remark:it-might-be-true}
  Let us notice that if we allow both the shear flow $U_s$ and the solutions of the Prandtl equations to belong to a wider space, namely $W^{1, \infty}_{\textnormal{loc}}\pare{\mathbb R_+}$, we can obtain an explicit solution of Prandtl, which behaves as $e^{\sigma_0\sqrt{k}t}$, at any time $t>0$, but does not decay as $y\to \infty$. By setting indeed   
  \begin{equation*}
      U_s(y):= \frac{(y-a)^2}{2},
  \end{equation*}
  we remark that the forcing term $\mathcal{R}_k$ in \eqref{eq:explicit-Rk-Prandtl} vanishes, thus $ u(t,x,y) := e^{i\tau \sqrt{k}t+ ikx}  \mathbb U_k(y)$ (with $\mathbb U_k$ as in \eqref{def-Uk-Prandtl}) is an exact solution of the Prandtl equations. However $\mathbb U_k(y) \sim i U_s'(y) = (y-a)$ as $y\gg 1$, namely the profile $\mathbb U_k$ behaves similarly as a couette flow, which does not decay as $y\to \infty$.      
  \end{remark}
  



\end{appendix}

\section*{Acknowledgment}

The third author was partially supported by GNAMPA and INDAM. The authors are thankful to Prof.~D.~G\'erard-Varet for constructive remarks and comments that helped to improve the manuscript.

\subsection*{Data Availability Statement} 
Data sharing is not applicable to this article, since no datasets were generated or analysed during the current study.

\subsection*{Conflict of interest} 
The authors declare that they have no conflict of interest. 

\printbibliography


\end{document}